\numberwithin{equation}{section}
\newtheorem{theorem}{Theorem}[section]
\newtheorem{proposition}[theorem]{Proposition}
\newtheorem{lemma}[theorem]{Lemma}
\newtheorem{definition}[theorem]{Definition}
\theoremstyle{remark}
\newtheorem{remark}[theorem]{Remark}
\definecolor{darkblue}{rgb}{0,0,0.7}
\newcommand{\CPEDIT}{\color{black}}
\newcommand{\abs}[1]{\left| #1 \right|}
\newcommand{\mV}{\mathcal V}
\newcommand{\mF}{\mathcal F}
\newcommand{\xss}{x_{s,s^*}}
\newcommand{\eps}{\varepsilon}
\newcommand*{\defeq}{\mathrel{\vcenter{\baselineskip0.5ex \lineskiplimit0pt
                     \hbox{\scriptsize.}\hbox{\scriptsize.}}}%
     		     =}
\newcommand*{\backdefeq}{=\mathrel{\vcenter{\baselineskip0.5ex \lineskiplimit0pt
                     \hbox{\scriptsize.}\hbox{\scriptsize.}}}%
     		     }
\newcommand{\innerprod}[2]{\left\langle #1,#2 \right\rangle}
\newcommand{\R}{{\mathbb R }}
\newcommand{\N}{{\mathbb N}}
\newcommand{\donothing}[1]{{}}
\newcommand{\xRightarrow}[2][]{\ext@arrow 0359\Rightarrowfill@{#1}{#2}}
\newcommand{\E}{\mathbb E}
\let\OLDthebibliography\thebibliography
\renewcommand\thebibliography[1]{
  \OLDthebibliography{#1}
  \setlength{\parskip}{1pt}
  \setlength{\itemsep}{1pt plus 0.3ex}
}
\title[Epidemiology with Human Behavior and Stochastic Effects]{A Compartmental Model for Epidemiology with Human Behavior and Stochastic Effects}
\author{Christian Parkinson and Weinan Wang}
\date{\today}
\begin{document}

\begin{abstract}
We propose a compartmental model for epidemiology wherein the population is split into groups with either comply or refuse to comply with protocols designed to slow the spread of a disease. Parallel to the disease spread, we assume that noncompliance with protocols spreads as a social contagion. We begin by deriving the reproductive ratio for a deterministic version of the model, and use this to fully characterize the local stability of disease free equilibrium points. We then append the deterministic model with stochastic effects, specifically assuming that the transmission rate of the disease and the transmission rate of the social contagion are uncertain. We prove global existence and nonnegativity for our stochastic model. Then using suitably constructed stochastic Lyapunov functions, we analyze the behavior of the stochastic system with respect to certain disease free states. We demonstrate all of our results with numerical simulations. 
\end{abstract}

\maketitle

\section{Introduction}

In this manuscript, we design and analyze deterministic and stochastic ordinary differential equation (ODE) epidemiological models incorporating human behavior. Our primary modeling assumptions---drawn from research by social scientists \cite{SS3,SS2,SS1}---are that while governing bodies will enact non-pharmaceutical intervention (NPI) protocols to stunt the spread of a disease, a nontrivial portion of the population will not comply with these protocols, and this noncompliance will have a nontrivial effect on disease spread. We also assume that, in the manner of a social contagion \cite{socialContagion}, the inclination against complying with public health mandates can spread throughout the population. Further, we assume there is uncertainty in transmission, both of the disease and of the social contagion representing noncompliance. 

Our model includes spread of noncompliance via mass action, and is similar to that analyzed in \cite{BPB} in the ODE setting and \cite{BPW, PW} in the partial differential equation (PDE) setting. We discuss the specifics of these models shortly. Pant et al. \cite{Gumel} propose a similar model, where populations are divided into subclasses based on adherence with NPIs, but the transfer between the classes is modeled using linear terms, as opposed to mass action terms. We opt for the latter, though it complicates the analysis, because of the mechanistic precedent from social contagion theory. Other approaches for including behavioral effects in  mathematical epidemiology include modeling of heterogeneous risk aversion (wherein there is behavioral diffusion) \cite{socialDiffusion}, kinetic models including imperfect adherence with NPIs \cite{imperfectAdherence}, behavioral changes based on local disease incidence \cite{incidenceBased}, and age-stratified models whose implicit assumption is the populations of different ages will exhibit significantly different behaviors \cite{ageStratified,pang1,pang3}. {\CPEDIT We are specifically inspired to noncompliance with NPIs as a social contagion due to reports such as \cite{Angus,Nature} wherein roughly 10\% of individuals not adhering to NPIs list their primary reason for not adhering as some variation of "none of my friends are adhering," suggests that peer imitation and social pressure play a significant role in NPI adoption.} For a recent reflection on incorporation of human behavior in epidemiological models, see \cite{GeneralHB}.

Parallel to the work incorporating human behavior, there has been  research on the stochastic compartmental epidemic models. Earlier work by Tornatore et al. \cite{vetro}, Gray et al. \cite{Gray}, and Ji et al. \cite{JJS} appended basic susceptible-infected-recovered (SIR) type models with uncertainty in the disease transmission rate. Later work on stochastic epidemic models includes analysis and/or numerical study of stochastic PDE models under a variety of different modeling decisions \cite{ahmed2023dynamical,li2023stochastic,nguyen2019stochastic}, and study of seasonality of disease emergence in stochastic ODE models \cite{linda}. For overviews on more general stochastic modeling of epidemics, see \cite{linda2,linda3}. Notably, Pang and Pardoux \cite{pang2} formulate a stochastic PDE model which is age-stratified, hence implicitly incorporating some heterogeneous behavior. 

{\CPEDIT This work lies at the intersection of compartmental models for epidemiology incorporating human behavior \`a la \cite{BPB} and those incorporating stochastic effects \`a la \cite{Gray,JJS,vetro}. To the best of the authors' knowledge, this manuscript is the first to model behavioral effects in such a mechanistic manner, while also considering stochastic perturbation. As such, this is an extension of the work in each of \cite{BPB,Gray,JJS,vetro}.}

The remainder of this manuscript is organized as follows. In section \ref{sec:det}, we present a deterministic ODE model for epidemiology which includes NPIs and noncompliant behavior, and give a full analysis of the stability of disease free equilibrium points for the deterministic model. In section \ref{sec:stoch1}, we append the model with stochastic perturbations, and prove that solutions of our new system exist globally and remain positive as long as there is positive initial data. In section \ref{sec:stoch2}, we provide stability analysis for the disease free equilibria of the stochastic system, and draw analogies between the deterministic and stochastic settings. Our main strategy for establishing asymptotic behavior of the stochastic system will be the construction of suitable stochastic Lyapunov functions in different parameter regimes. In section \ref{sec:sim}, we demonstrate all of our results with simulations, and we finish with brief concluding remarks and comments regarding avenues of future work in section \ref{sec:conclusion}.

As a final note before commencing, we mention that for our stochastic model in sections \ref{sec:stoch1} and \ref{sec:stoch2}, we opt to perturb the deterministic model using Gaussian white noise with the It\^o interpretation as in \cite{Gray,JJS,vetro}. This is to distinguish from the Stratonovich calculus approach used, for example, in \cite{BL,CMZ,FWZ,LM}. Braumann \cite{Braumann} elucidates the difference between these approaches, and concludes that, in essence, it is ``merely semantic," stemming from the informal interpretation of phrases like \emph{average per capita growth rate} or \emph{average infection rate}. In the context of SIR type models, the difference usually presents itself mathematically in the following way. When using It\^o noise, the strength of perturbation typically appears directly in the conditions for disease extinction or persistence as seen below and in \cite{Gray,JJS,vetro}. By contrast, when using Stratonovich noise, the randomness does not cause a change in the reproductive ratio or the extinction criteria, but rather changes the bounds on the infected population in the case of disease persistence; see, for example \cite{FWZ,LM}. The It\^o and Stratonovich interpretations have some theoretical tradeoffs as well. For example, in the It\^o formulation for any reasonably well-behaved function $f$, the integral $\int^t_0 f dW_t$ is a continuous local martingale, while the same is not true in the Stratonovich formulation. However, a stochastic ODE with Stratonovich noise can be easily realized as a limit of a specific type of perturbed deterministic ODE via the Wong-Zakai theorem \cite{WZ,LM}, while the same is not true when using It\^o noise. In another direction, some authors introduce noise via an Ornstein-Uhlenbeck process \cite{LP,LiuOU} which has the mean-reverting property making the possibility of negative perturbation less likely \cite{MF2,MF}, as indeed d'Onofrio \cite{dnoisy} points out that in some biological models, Gaussian white noise can lead to questionable results. However, we choose white noise to reflect that random effects can potentially raise or lower infectivity rates, and neither our theorems nor our simulations indicate biologically absurd results. Instead, our results imply that while the random perturbations can raise or lower infectivity, to ensure disease extinction, the policy-maker must ``plan for the worst", though individual paths for the stochastic ODE may reflect better or worse outcomes than in the corresponding deterministic case. All this to say that while there are many manners of introducing stochastic perturbation, following \cite{Gray,JJS,vetro}, we use simple Gaussian white noise and leave the integration of other forms of randomness into our model as an avenue of future work.

\section{A deterministic SIR model with noncompliant behavior} \label{sec:det}

To build our model, we begin with the basic SIR model of Kermack-McKendrick \cite{OG} assuming a natural birth rate of $b>0$, a natural death rate of $\delta > 0$, a transmission rate of $\beta > 0$ and a recovery rate of $\gamma>0$: \begin{equation}\label{eq:basicSIR}
\begin{split}
	\frac{dS}{dt} &= b-\beta SI - \delta S,\\
	\frac{dI}{dt} &= \beta SI - (\gamma+\delta) I, \\
    \frac{dR}{dt} &= \gamma I - \delta R. 
	\end{split}
 \end{equation} We modify \eqref{eq:basicSIR} in several ways, and in doing so, introduce several new variables and parameters. Thus, in addition to the discussion in the ensuing paragraph, to aid the reader, we include descriptions of all variables and parameters in figure \ref{fig:params} and include a flow diagram for our system in figure \ref{fig:flowDiagram}.
 
 {\CPEDIT Following the modeling strategy of \cite{BPB,BPW,PW,P1}, we imagine a scenario where the government enacts non-pharmaceutical intervention (NPI) measures, such as mask-wearing, social distancing, or shelter-at-home orders, which decrease the infection rate by decreasing the amount of mixing by some portion $\alpha \in [0,1]$ for those who comply. We then split each compartment $S,I,R$ into compliant individuals (who retain the labels $S,I,R$) and noncompliant individuals (henceforth labeled $S^*,I^*,R^*$). The noncompliant individuals do not receive a reduction in mixing. Borrowing from social contagion theory, we imagine that noncompliance also spreads via mass action with rate $\mu>0$, parallel to the disease spread. While some research suggests the possibility that NPI fatigue may make it unlikely for noncompliant individuals to readopt compliant behavior \cite{RefForNu3}, other work suggests that well-implemented public advocacy programs can increase compliance with NPIs \cite{RefForNu1, RefForNu2}. Accordingly, for full generality, we allow noncompliant individuals to become compliant again with transfer rate $\nu\ge 0.$ We note that this effect can be removed by setting $\nu = 0$ if one prefers and all subsequent derivations and theorems still hold.}
 
 Finally, we assume that any newly added members of the population are noncompliant with probability $\xi \in [0,1]$. Defining the \emph{actively mixing} infectious population by $I^{(M)} = (1-\alpha)I+I^*$ and the total noncompliant population $N^* = S^* + I^* + R^*$. All of these modeling decisions lead to the equations \begin{equation} \label{eq:SIRwithCompliance} \begin{split}
	\frac{dS}{dt} &= (1-\xi) b - \beta(1-\alpha)SI^{(M)} -\mu SN^* + \nu S^* - \delta S,\\
	\frac{dI}{dt} &= \beta (1-\alpha)SI^{(M)} - \gamma I - \mu IN^* + \nu I^* - \delta I, \\
	\frac{dR}{dt} &= \gamma I - \mu RN^* + \nu R^*- \delta R,\\
	\frac{dS^*}{dt}&= \xi b -\beta S^*I^{(M)} + \mu SN^* - \nu S^* - \delta S^*,\\
	\frac{dI^*}{dt} &= \beta S^*I^{(M)} - \gamma I^* +\mu IN^* - \nu I^* - \delta I^*, \\
	\frac{dR^*}{dt} &= \gamma I^* + \mu RN^* - \nu R^* - \delta R^*.
	\end{split}
	\end{equation} 

    \begin{figure}[b!]
\centering
\begin{tabular}{|l l|}
\hline
    Quantity & Description\\
    \hline
    $S$ & compliant susceptible population\\
    $I$ & compliant infectious population\\
    $R$ & compliant recovered population\\
    $S^*$ & noncompliant susceptible population\\
    $I^*$ & noncompliant infectious population\\
    $R^*$ & noncompliant recovered population\\
    $N^*$ & total noncompliant population ($N^* = S^* + I^* + R^*)$\\ 
    $b$ & natural birth rate $(b > 0)$\\
    $\xi$ & portion of newly introduced population which is noncompliant ($\xi \in [0,1]$)\\
    $\delta$ & natural death rate ($\delta > 0$)\\
    $\beta$ & natural infectivity strength of the disease $(\beta > 0)$\\
    $\gamma$ & natural recovery rate for the disease $(\gamma > 0)$\\
    $\alpha$ &  reduction in infectivity due to compliance with NPIs ($\alpha \in (0,1]$)\\
    $\mu$ & baseline infectivity strength of noncompliance ($\mu > 0$)\\ 
    $\nu$ & maximal achievable rate of recovery from noncompliance ($\nu \ge 0)$\\
    \hline
\end{tabular}
\caption{A list of variables and parameters for \eqref{eq:SIRwithCompliance}.}
\label{fig:params}
\end{figure}

System \eqref{eq:SIRwithCompliance} is the deterministic version of the model that we are interested in. It is very similar to the compartmental model proposed in \cite{BPB} (ours is simpler in that there is no asymptomatic infected class, but more complicated due to the inclusion of recovery from noncompliance, as well as birth and death terms). Before we introduce the stochastic model (\eqref{eq:StochSIR} below), we prove some results for the deterministic model \eqref{eq:SIRwithCompliance} which we compare to the stochastic model later. This analysis largely follows \cite{BPB}, though we give a fuller picture of the disease free equilibria and their stability properties. 

First notice that, defining $N_{\text{total}} = S+I+R+S^*+I^*+R^*$, we have \begin{equation} \label{eq:totalPop}\dot N_{\text{total}} = b - \delta N_{\text{total}}.\end{equation} Without loss of generality, we can normalize so that $N_{\text{total}}(0)=1$ and thus \begin{equation} \label{eq:totalPopBound} N_{\text{total}}(t) = \frac b \delta + \left(1-\frac b \delta \right)e^{-\delta t}.\end{equation} From here, we will assume that $\tfrac b \delta \ge 1$, so that the total population remains bounded by $b/\delta$ and tends to $b/\delta$ as $t\to \infty$.
\begin{figure}[t!]
    \includegraphics[width=0.7\textwidth]{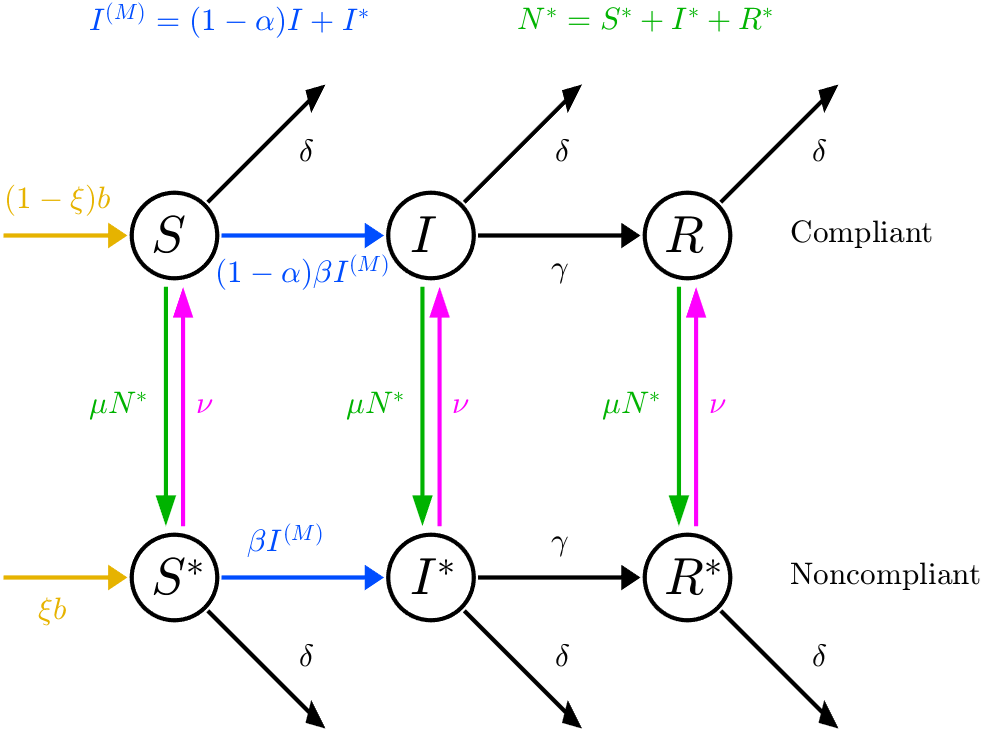}
    \caption{The flow diagram for \eqref{eq:SIRwithCompliance}. Arrows leaving a given population denote outward flow proportional to that population with the given rate. Colored arrows connote population transfer which is modified or introduced for our model (i.e., manners in which our model substantively differs from the basic SIR model).}
    \label{fig:flowDiagram}
\end{figure}

We introduce some notation which is standard when deriving basic reproductive ratios for compartmental epidemiology models using the next generation matrix method \cite{NextGenMat1,NextGenMat2,NextGenMat3}. We represent the solution of \eqref{eq:SIRwithCompliance} by \begin{equation} \label{eq:ordering}x = (x_1,x_2,x_3,x_4,x_5,x_6)= (I,I^*,S,R,S^*,R^*). \end{equation} Note that when writing the solution this way, we have moved the infectious compartments to the front, keeping others in the order in which they appear in \eqref{eq:SIRwithCompliance}. Then the system can be written \begin{equation}
    \label{eq:SIRNGMformat} \dot x = \mathcal F(x) - \mathcal V(x),
\end{equation} where $\mF,\mV:\R^6 \to \R^6$. Here $\mathcal F_i$ contains all terms which introduce new infections into compartment $i$, and $\mathcal V_i$ contains all other transfer into and out of compartment $i$. We then further decompose $\mathcal V(x) = \mV^-(x) - \mV^+(x)$, where $\mV^-_i$ corresponds to flow out of compartment $i$ and $\mV^+_i$ corresponds to any flow into compartment $i$ which does not introduce new infections. For our particular model, the infection function is 
\begin{equation} \label{eq:mF}
    \mF(x) = \begin{bmatrix} \beta(1-\alpha)S((1-\alpha)I+I^*) \\ \beta S^*((1-\alpha)I+I^*) \\ 0 \\ 0 \\ 0 \\ 0\end{bmatrix},
\end{equation} 
and the transfer functions are 
\begin{equation} \label{eq:mV}
    \mV^+(x) = \begin{bmatrix} \nu I^* \\ \mu IN^* \\ (1-\xi) b + \nu S^* \\ \gamma I + \nu R^* \\ \xi b + \mu SN^* \\ \gamma I^* + \mu RN^* \end{bmatrix}, \,\,\,\,\, \mV^-(x) = \begin{bmatrix} \gamma I + \mu IN^* + \delta I \\ \gamma I^* + \nu I^* + \delta I^* \\ \beta(1-\alpha)S((1-\alpha)I+I^*) + \mu SN^* + \delta S \\ \mu RN^* + \delta R \\ \beta S^*((1-\alpha)I+I^*)+\nu S^* + \delta S^* \\ \nu R^* + \delta R^*\end{bmatrix}.
\end{equation} With nonnegative initial conditions, the solutions of \eqref{eq:SIRwithCompliance} will remain nonnegative, so our state space is $\overline\R^6_+$. We define the disease free (DF) set $$U_{DF} = \{x \in \overline\R^6_+ \, : x_1 = x_2 = 0\}$$ and the set of admissible disease free states by \begin{equation} \label{eq:ICset}U_0 = \{x \in \overline\R^6_+ \, : \, x_3 + x_5 \le \tfrac b \delta, x_1 = x_2 = x_4 = x_6 = 0\} \subset U_{DF}.\end{equation}  We are interested in the stability of $U_0$ with regards to $U_{DF}$. Notice that points $x_0 \in U_0$, even if their components sum to $b/\delta$, are \emph{not} necessarily equilibrium points for \eqref{eq:SIRwithCompliance}, since there is still transfer between $S$ and $S^*$. Because of this, we need to adapt our definition of stability. Following \cite{stableSetDFE}, we define stability for $T \subset U_{DF}$ in the following sense. \\

\begin{definition}[Stability of $U \subset U_{DF}$] 
We say that $U \subset U_{DF}$ is \emph{disease-free-stable} (DF-stable) if there is a neighborhood $Z$ of $U$ in $\overline\R^6_+$ such that any solution $x(t)$ of \eqref{eq:SIRNGMformat} with $x(0) \in Z$ satisfies $$\abs{x_1(t)}, \abs{x_2(t)} \le c_1 \max\{\abs{x_1(0)},\abs{x_2(0)}\} e^{-c_2t}$$ for some constants $c_1,c_2>0$ which are independent of $x(0).$ We say that a point $x_0 \in U_{DF}$ is DF-stable if the singleton set $\{x_0\}$ is DF-stable.
\end{definition} 

Intuitively, $U \subset U_{DF}$ is disease-free-stable if dynamics beginning  near enough to $U$ tend toward $U_{DF}$ exponentially fast as $t\to\infty$. In particular, for an equilibrium point $x_0 \in U_{DF}$, the classical notion of local asymptotic stability implies this notion disease free stability for $\{x_0\}$. 

{\CPEDIT To apply the definition as in \cite{stableSetDFE}, we linearize \eqref{eq:SIRwithCompliance} around a point $x_{s,s^*} = (0,0,s ,0,s^*,0) \in X_0$ satisfying $s+s^* \le b/\delta$. We see}
\begin{equation}
D\mF(\xss) = \begin{bmatrix} \beta(1 -\alpha)^2 s & \beta(1-\alpha)s & \cdots \\ \beta (1-\alpha)s^* & \beta s^* & \\ \vdots & & \ddots \end{bmatrix}
,
\end{equation}
where all other entries of are $D\mF(\xss)$ are zero. Next, \begin{equation} \label{eq:Vp} D\mV^+(\xss) = \begin{bmatrix} 
0 & \nu & 0 & 0 & 0 & 0 \\
\mu s^* & 0 & 0 & 0 & 0 & 0 \\
0 & 0 & 0 & 0 & \nu & 0 \\
\gamma & 0 & 0 & 0 & 0 & \nu \\
0 & \mu s & \mu s^*& 0 & \mu s & \mu s \\
0 & \gamma & 0 & \mu s^* & 0 & 0
\end{bmatrix} \end{equation} and \begin{equation} \label{eq:Vm}D\mV^-(\xss) = \begin{bmatrix}
    \gamma + \delta + \mu s^* & 0 & 0 & 0 & 0 & 0 \\
    0&\gamma + \delta + \nu &  0 & 0 & 0 & 0 \\
    \beta (1-\alpha)^2s & (\beta(1-\alpha)+\mu)s& \delta + \mu s^* & 0 & \mu s & \mu s \\
    0 & 0 & 0 & \delta + \mu s^* & 0 & 0\\
    \beta (1-\alpha)s^* & \beta s^* & 0 & 0 & \delta + \nu & 0 \\
    0 & 0 & 0 & 0 & 0 & \delta + \nu
\end{bmatrix}.\end{equation} We let $$F_{s,s^*} = \begin{bmatrix} \beta(1-\alpha)^2 s& \beta(1-\alpha)s \\ \beta (1-\alpha)s^* & \beta s^*  \end{bmatrix}, \,\,\,\,\, V_{s,s^*} = \begin{bmatrix} \gamma + \delta + \mu s^*& -\nu \\ -\mu s^* & \gamma + \delta + \nu \end{bmatrix}.$$ 
These are the $2\times 2$ principal minors for $D\mF(\xss)$ and $D\mV(\xss) = D\mV^-(\xss) - D\mV^+(\xss)$, respectively. Then the linearization of the infectious compartments of \eqref{eq:SIRNGMformat} around $\xss$ is \begin{equation} \label{eq:lin} \begin{bmatrix} x_1 \\ x_2 \end{bmatrix}' = (F_{s,s^*}-V_{s,s^*}) \begin{bmatrix} x_1 \\ x_2 \end{bmatrix}. \end{equation} 
We note that $V_{s,s^*}$ is invertible: its determinant $(\gamma + \delta)(\gamma + \delta + \nu + \mu s^*)$ is positive. It is proven in \cite{NextGenMat3} that if $\rho(F_{s,s^*} V_{s,s^*}^{-1}) < 1$, then all eigenvalues of $F_{s,s^*}-V_{s,s^*}$ have negative real part, and thus the linearization \eqref{eq:lin} exhibits exponential decay as $t\to\infty$. This motivates the definition of the reproductive ratio $\mathscr R_0(s,s^*)$ corresponding to the disease free state $\xss$: \begin{equation} \label{eq:R000}
\mathscr{R}_0(s,s^*) = \rho(F_{s,s^*} V_{s,s^*}^{-1}),
\end{equation} where $\rho(\cdot)$ denotes the spectral radius. In our case, $F_{s,s^*}$ is rank 1, so it is easy to explicitly find the spectral radius of $F_{s,s^*} V_{s,s^*} ^{-1}$. After some simplification, we have \begin{equation} \label{eq:R0deterministic}
    \mathscr R_0(s,s^*) = \frac{\beta}{\gamma + \delta} \left(s \left[(1-\alpha)^2 + \frac{\alpha(1-\alpha) \mu s^*}{\gamma + \delta + \nu + \mu s^*}\right] + s^* \left[1 - \frac{\alpha \nu}{\gamma + \delta + \nu + \mu s^*}\right]\right).
\end{equation} {\CPEDIT This formula has a fairly straightforward interpretation as a weighted average of the reproductive ratios for the compliant and noncompliant classes, complicated by the fact that there is some transfer between the two.  Notice in particular that} \begin{equation} \CPEDIT \label{eq:bestWorst} \mathscr R_0(\tfrac b \delta,0) = \frac b \delta \frac{\beta(1-\alpha)^2}{\gamma+\delta}, \,\,\,\,\,\,\,\,\,\,\,\,\,\,\, \mathscr R_0(0,\tfrac b \delta ) = \frac b \delta \cdot \frac{\beta}{\gamma +\delta}\left(1 - \frac{\alpha \nu}{\gamma + \delta + \nu + \mu s^*}\right). \end{equation}  {\CPEDIT The first is the reproductive ratio from the basic SIR model with infection rate $\beta (1-\alpha)^2$ which occurs when the entire population is compliant. The second encapsulates the ``worst case scenario" where the population is at its steady state and the entire population is noncompliant. In this case we lose the $(1-\alpha)^2$ reduction in infectivity, but the reproductive ratio is still slightly decreased due to the possibility of individuals becoming compliant. }

{\CPEDIT Given this definition of $\mathscr R_0(s, s^*)$, we can establish stability properties for disease free equilibria.}

\begin{lemma} \label{lem:R0increase}
As defined in \eqref{eq:R0deterministic}, the maximum value of $\mathscr R_0(s,s^*)$ for $s,s^*\ge 0$ and $s+s^* \le \tfrac b \delta$ occurs at $(s,s^*) = (0,\tfrac b \delta).$
\end{lemma}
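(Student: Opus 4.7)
The plan is to reduce this two-variable constrained maximization to a one-dimensional problem: first, show that $\mathscr R_0$ is nondecreasing in $s^*$ on $\{s,s^*\ge 0\}$, which forces the maximum on the triangle $\{s+s^*\le b/\delta\}$ to lie on the edge $s+s^* = b/\delta$; then, show that along this edge the maximum is attained at the endpoint $s=0$.

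For the monotonicity in $s^*$, after rewriting $\mathscr R_0$ over the common denominator $A+\mu s^*$ with $A\defeq\gamma+\delta+\nu$, a direct differentiation gives
\[
\partial_{s^*}\mathscr R_0(s,s^*) \;=\; \frac{\beta}{\gamma+\delta}\left[1 + \frac{\alpha A\,\bigl(s(1-\alpha)\mu - \nu\bigr)}{(A+\mu s^*)^2}\right].
\]
The bracket is trivially positive when $s(1-\alpha)\mu\ge\nu$; in the opposite case, the bound $\alpha A\nu \le A^2 \le (A+\mu s^*)^2$ (which uses only $\alpha\le 1$ and $\nu\le A$) shows the bracket is still nonnegative. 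Hence the maximum lies on $s+s^* = b/\delta$.

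For the reduced one-dimensional problem, set $B\defeq b/\delta$, $K_B\defeq A+\mu(B-s)$, and $h(s)\defeq\mathscr R_0(s,B-s)$. Expanding $h(s)-h(0)$ directly and using the identity $A+\mu B = K_B+\mu s$ to combine the two $\nu$-contributions into a single term proportional to $sA/\bigl(K_B(A+\mu B)\bigr)$, one arrives at
\[
h(s) - h(0) \;=\; \frac{s\alpha\beta}{\gamma+\delta}\left[\frac{(1-\alpha)\mu(B-s)}{K_B} + \frac{A\nu}{K_B(A+\mu B)} - (2-\alpha)\right].
\]
Clearing the positive denominator $K_B(A+\mu B)$ and substituting $\mu(B-s) = K_B - A$ to collapse the first two terms, the claim $h(s)\le h(0)$ reduces to the inequality
\[
(A+\mu B)\bigl[K_B + (1-\alpha)A\bigr] \;\ge\; A\nu,
\]
which follows at once from $A+\mu B\ge A$, $K_B\ge A$, and $A\ge\nu$, with considerable slack.

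The main obstacle is purely bookkeeping: the explicit formula for $\mathscr R_0(s,s^*)$ looks unwieldy, and the natural monotonicity arguments do not apply termwise because the quantity $s(1-\alpha)\mu-\nu$ can have either sign. Introducing the shorthands $A$ and $K_B$ and exploiting the identity $A+\mu B = K_B+\mu s$ are the key moves that cause the algebra to telescope, after which the coarse bounds $\alpha\le 1$ and $\nu\le A$ finish both steps.
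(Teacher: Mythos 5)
Your proof is correct and follows essentially the same strategy as the paper: use monotonicity in $s^*$ to push the maximum onto the edge $s+s^*=b/\delta$, then show the edge maximum sits at $s=0$. The only cosmetic difference is that along the edge you bound the difference $h(s)-h(0)$ directly instead of checking the sign of the derivative along the edge, and your $s^*$-monotonicity bound (via $\alpha A\nu\le(A+\mu s^*)^2$) is a slightly different but equally valid packaging of the paper's computation.
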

\begin{proof}
    Fixing $s^*$, it is clear that $\mathscr R_0(s,s^*)$ is increasing in $s$, since it is linear with a positive slope. A quick calculation shows that $$\frac{d\mathscr R_0}{ds^*} = \frac{\beta}{\gamma + \delta}\left(  \frac{\gamma + \delta + (1-\alpha)\nu + \mu s^*}{\gamma + \delta + \nu + \mu s^*}+ \frac{\alpha (1-\alpha)\mu s(\gamma + \delta + \mu) + \alpha \nu \mu s^*}{(\gamma + \delta + \nu + \mu s^*)^2}\right) > 0$$ which implies that $\mathscr R_0(s,s^*)$ is increasing in $s^*$ for fixed $s$. Thus the maximum must occur somewhere on the boundary line $s+s^* = \tfrac b \delta$. Considering $(s,s^*) = (\tfrac b \delta - \theta, \theta)$ for $\theta \in [0,\tfrac b \delta]$, we see \begin{equation} \label{eq:drdtheta}\begin{split}
        \frac{d}{d\theta}\Big(\mathscr R_0 (\tfrac b \delta - \theta,\theta)\Big) &= \frac{\alpha \beta }{\gamma + \delta }\Big((1-\alpha) + (1-A)\Big)
    \end{split} \end{equation} where $$A = \frac{ (1-\alpha)\mu\theta(\gamma + \delta + \nu + \mu\theta) + \nu(\gamma+\delta + \nu)-\left( \tfrac b \delta - \theta\right)(1-\alpha)\mu(\gamma + \delta + \nu)}{(\gamma + \delta + \nu + \mu\theta)^2}.$$ By dropping the negative term in the numerator, and adding $\mu \theta$ into the parentheses in the second term, we see $A < 1$ and thus \eqref{eq:drdtheta} shows that $\mathscr R_0(\tfrac b \delta-\theta,\theta)$ increases in $\theta$ so that the maximum occurs at $(s,s^*) = (0,\tfrac b \delta)$.
\end{proof}

Given this, we can achieve DF-stability for our admissible set of disease free states defined in \eqref{eq:ICset} with a very strong assumption on $\mathscr R_0(s,s^*)$.

\begin{theorem}
    If $\mathscr R_0(0,\tfrac b \delta) < 1$, then $U_0$ is DF-stable.
\end{theorem}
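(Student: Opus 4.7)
The plan is to construct a linear Lyapunov function $L(I,I^*) = v_1 I + v_2 I^*$ with $v_1,v_2>0$ for the infectious compartments, choosing $v_1,v_2$ and a rate $c>0$ so that $\dot L \le -cL$ uniformly along trajectories starting near $U_0$. First, I would rewrite the $I$ and $I^*$ equations from \eqref{eq:SIRwithCompliance} as the linear, time-varying system $[\dot I,\dot I^*]^T = M(S,S^*,N^*)[I,I^*]^T$, where
$$M(S,S^*,N^*) = \begin{bmatrix} \beta(1-\alpha)^2 S - (\gamma+\delta+\mu N^*) & \beta(1-\alpha)S + \nu \\ \beta(1-\alpha)S^* + \mu N^* & \beta S^* - (\gamma+\delta+\nu) \end{bmatrix}$$
has nonnegative off-diagonal entries and agrees with $F_{S,S^*}-V_{S,S^*}$ when $N^* = S^*$. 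By \eqref{eq:totalPop}--\eqref{eq:totalPopBound}, for any sufficiently small neighborhood $Z$ of $U_0$ in $\overline{\R}^6_+$, trajectories from $Z$ remain feasible in the sense that $S(t)+S^*(t)$ and $N^*(t)$ are bounded by $b/\delta+\varepsilon$ for all $t\ge 0$, with $\varepsilon$ arbitrarily small.

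Differentiating $L$ along solutions gives $\dot L = a_I(S,S^*,N^*)\,I + a_{I^*}(S,S^*,N^*)\,I^*$, with both coefficients affine in $(S,S^*,N^*)$. Taking the ansatz $v_2\ge v_1$, all leading coefficients of these affine maps are nonnegative, so the maxima of $a_I$ and $a_{I^*}$ over the feasible set are attained at the corner $(S,S^*,N^*)=(0,b/\delta,b/\delta)$. Requiring $a_I \le -cv_1$ and $a_{I^*}\le -cv_2$ at that corner collapses to the two-sided bound
$$\frac{\nu}{\gamma+\delta+\nu-\beta b/\delta - c} \;\le\; \frac{v_2}{v_1} \;\le\; \frac{\gamma+\delta+\mu b/\delta - c}{(\beta(1-\alpha)+\mu)\,b/\delta}.$$
The main obstacle is verifying that the hypothesis $\mathscr R_0(0,b/\delta)<1$ makes this interval nonempty and admits a choice with $v_2/v_1 \ge 1$. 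Rearranging $\mathscr R_0(0,b/\delta)<1$ via \eqref{eq:R0deterministic} and clearing denominators yields $\beta(b/\delta)(\gamma+\delta+\mu b/\delta + \nu(1-\alpha)) < (\gamma+\delta)(\gamma+\delta+\mu b/\delta+\nu)$, from which one reads off both (i) $\beta b/\delta < \gamma+\delta+\nu$, so the lower bound is finite and positive at $c=0$, and (ii) at $c=0$ the lower bound is strictly smaller than the upper bound. A short case check depending on the sign of $\gamma+\delta - \beta b/\delta$ shows that at least one of these bounds is itself at least $1$, so the extra constraint $v_2/v_1 \ge 1$ is compatible with the interval. Continuity then supplies a small $c>0$ for which the interval remains nonempty, and I fix any admissible $v_2/v_1$. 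Lemma~\ref{lem:R0increase} justifies why it is enough to optimize at the single corner $(0,b/\delta)$ rather than sweeping all feasible states.

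With $(v_1,v_2,c)$ fixed globally (independently of the base point in $U_0$), Gronwall's inequality gives $L(t) \le L(0)e^{-ct}$ for every trajectory in $Z$. Since $v_1 I(t),\, v_2 I^*(t) \le L(t)$, I conclude
$$|I(t)|,\,|I^*(t)| \;\le\; \frac{v_1+v_2}{\min(v_1,v_2)}\, \max\{|I(0)|,|I^*(0)|\}\,e^{-ct},$$
which is precisely the DF-stability estimate with $c_1 = (v_1+v_2)/\min(v_1,v_2)$ and $c_2 = c$. Because the constants are uniform over $Z$ and independent of the base point in $U_0$, this matches the definition of DF-stability for the full set $U_0$.
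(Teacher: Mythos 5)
Your argument is correct, but it is a genuinely different route from the paper's. The paper disposes of this theorem in two lines: by lemma \ref{lem:R0increase}, $\rho(F_{s,s^*}V_{s,s^*}^{-1})=\mathscr R_0(s,s^*)\le\mathscr R_0(0,\tfrac b\delta)<1$ uniformly over $U_0$, and then the cited result \cite[Theorem 3.1]{stableSetDFE} converts uniform exponential decay of the family of linearizations \eqref{eq:lin} into DF-stability of the set. You instead work directly with the nonlinear system, exploiting the fact that the $(I,I^*)$-equations are linear in $(I,I^*)$ with a Metzler coefficient matrix whose entries depend only on the uniformly bounded quantities $S,S^*,N^*$, and you build a weighted function $v_1I+v_2I^*$ satisfying a differential inequality $\dot L\le -cL$ along all trajectories from a neighborhood of $U_0$. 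Your key computation is sound: with $v_2\ge v_1$ the two affine coefficient functions are maximized at the corner $(S,S^*,N^*)=(0,b/\delta,b/\delta)$, the resulting two-sided constraint on $v_2/v_1$ is exactly as you state, and clearing denominators shows its nonemptiness at $c=0$ is \emph{equivalent} to $\mathscr R_0(0,\tfrac b\delta)<1$ via \eqref{eq:R0deterministic}; your case split on the sign of $\gamma+\delta-\beta b/\delta$ correctly secures an admissible ratio $\ge 1$, and strictness plus continuity gives room both for $c>0$ and for the $\varepsilon$-inflation of the population bound coming from starting near (not on) $U_0$. What your approach buys is a self-contained, fully explicit proof—no appeal to \cite{stableSetDFE}, and explicit constants $c_1,c_2$—which in effect re-derives, for this $2\times 2$ cooperative block, the equivalence between $\rho(FV^{-1})<1$ and the existence of a positive left vector making $F-V$ strictly diagonally dominated; what the paper's route buys is brevity and uniformity of exposition with the next-generation-matrix framework used throughout section \ref{sec:det}. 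Two small points to tidy: your closing appeal to lemma \ref{lem:R0increase} is not what justifies the corner reduction (your own monotonicity argument with $v_2\ge v_1$ does that; the lemma concerns monotonicity of $\mathscr R_0$ itself), and you should state explicitly that the corner inequalities are verified strictly at $b/\delta$ so that they survive replacing $b/\delta$ by $b/\delta+\varepsilon$ for the neighborhood $Z$.
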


\begin{proof}
    This follows from lemma \ref{lem:R0increase} and \cite[Theorem 3.1]{stableSetDFE} after noting that the family of linearizations \eqref{eq:lin} satisfy $\rho(F_{s,s^*} V_{s,s^*}^{-1}) = \mathscr R_0(s,s^*)\le \mathscr R_0(0,\tfrac b \delta) < 1$, meaning that \eqref{eq:lin} exhibits exponential decay with decay rate which is uniform over $x_0 \in U_0$. 
\end{proof}

Beyond this, we derive the exact values for the disease free steady states. Inserting $I=I^*=R=R^*=0$ into \eqref{eq:SIRwithCompliance}, we see the disease free steady states $(s,s^*)$ satisfy \begin{equation}
\label{eq:deterministicDFE1}
\begin{split}
    (1-\xi) b-\mu ss^* + \nu s^* -\delta s &= 0,\\
    \xi b + \mu ss^* - \nu s^* -\delta s^* &=0.
\end{split}
\end{equation} Adding the equations in \eqref{eq:deterministicDFE1}, we see that $s+s^*= \frac b \delta$ (which also follows from \eqref{eq:totalPop}). Inserting $x^* = \frac b \delta -s$ into the first equation and rearranging yields the following quadratic equation for the steady states of the compliant population: $$s^2 - \left(\frac b \delta + \frac {\nu+\delta} \mu\right)s + \frac b \delta \left(\frac{\nu+(1-\xi)\delta}{\mu} \right)=0.$$ After some algebra, the roots of this equation can be written \begin{equation} \label{eq:DFEsDeterministic} s_\pm = \frac 1 2\left( \frac{b}{\delta} + \frac{\nu+\delta}{\mu} \pm \sqrt{\left(\frac{b}{\delta} - \frac{\nu+\delta}{\mu} \right)^2 + \frac{4\xi b}{\mu} } \right). \end{equation} The corresponding noncompliant populations at the DFEs are recovered from $s^*_{\pm} = \frac b \delta - s_{\pm}$. We include some observations regarding these (omitting the proofs because they are exercises in simple if tedious algebra). \\

\begin{proposition} \label{prop:DFEs}
If $\xi = 0$, the disease free equilibria $\left(0,0,s,0,s^*,0\right)$ of \eqref{eq:SIRwithCompliance} are given by \begin{equation} \label{eq:DFE12}
x_1 = \left(0,0,\frac b \delta,0,0,0\right), \,\,\,\,\,\,\, 
x_2 = \left(0,0,\frac{\delta + \nu}{\mu},0,\frac{b}{\delta}-\frac{\delta + \nu}{\mu},0\right).
\end{equation} 
The second is only distinct and physically meaningful if $\frac{b}\delta > \frac{\delta + \nu}{\mu}$. The DFEs $x_1$ and $x_2$ correspond (respectively) to the values of $s_+$ and $s_-$ from \eqref{eq:DFEsDeterministic}.

If $\xi \in (0,1]$, then from \eqref{eq:DFEsDeterministic}, we see that $s_+ > \frac b \delta$, which is not physically meaningful. In this case, there is one DFE given by \begin{equation} \label{eq:DFE3}
\begin{split}
x_3 &=  \left(0,0,\tfrac 1 2\left( \tfrac{b}{\delta} + \tfrac{\delta + \nu}{\mu} - \sqrt{\left(\tfrac{b}{\delta} - \tfrac{\delta + \nu}{\mu} \right)^2 + \tfrac{4\xi b}{\mu} } \right), 0,\tfrac 1 2\left( \tfrac{b}{\delta} - \tfrac{\delta + \nu}{\mu} + \sqrt{\left(\tfrac{b}{\delta} - \tfrac{\delta + \nu}{\mu} \right)^2 + \tfrac{4\xi b}{\mu} } \right),0 \right)
\end{split}
\end{equation} 
which is physically meaningful for any positive choices of the parameters. Further, in the case that $\tfrac b \delta \le \frac{\delta + \nu}{\mu}$, $x_3$ will coincide with $x_1$ in the limit as $\xi \to 0$, and in the case that $\tfrac b \delta > \frac{\delta + \nu}{\mu}$, $x_3$ will coincide with $x_2$ in the limit as $\xi \to 0.$
\end{proposition}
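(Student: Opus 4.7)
The plan is to derive the DFEs as roots of the quadratic leading to \eqref{eq:DFEsDeterministic} and then dispatch each of the three claims by elementary case analysis driven by the sign of $b/\delta - (\delta+\nu)/\mu$. The derivation itself is already essentially done in the text preceding the proposition: I would set $I = I^* = R = R^* = 0$ in \eqref{eq:SIRwithCompliance} to obtain \eqref{eq:deterministicDFE1}, sum the two equations to get $s + s^* = b/\delta$ (consistent with \eqref{eq:totalPop}), and substitute $s^* = b/\delta - s$ into the first equation to produce the quadratic whose roots are $s_\pm$.

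For $\xi = 0$, I would observe that the discriminant in \eqref{eq:DFEsDeterministic} simplifies to $|b/\delta - (\delta+\nu)/\mu|$, so $s_+ = \max\{b/\delta, (\delta+\nu)/\mu\}$ and $s_- = \min\{b/\delta, (\delta+\nu)/\mu\}$. When $b/\delta > (\delta+\nu)/\mu$, the two values are distinct with nonnegative noncompliant counterparts, yielding the two DFEs $x_1$ (from $s_+ = b/\delta$) and $x_2$ (from $s_- = (\delta+\nu)/\mu$); when $b/\delta \le (\delta+\nu)/\mu$, the smaller root forces $s^*_- \le 0$, so $x_2$ is unphysical and only $x_1$ remains.

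For $\xi \in (0,1]$, the core claims are that $s_+ > b/\delta$ (so $s_+$ is inadmissible) while $s_-$ remains in $[0, b/\delta]$. Each amounts to a one-line squaring argument: the inequality $s_+ > b/\delta$ reduces to $\sqrt{(b/\delta - (\delta+\nu)/\mu)^2 + 4\xi b/\mu} > b/\delta - (\delta+\nu)/\mu$, which is trivial when the right side is nonpositive and otherwise immediate from $4\xi b/\mu > 0$; $s_- \le b/\delta$ is analogous. The lower bound $s_- \ge 0$ reduces, after squaring $b/\delta + (\delta+\nu)/\mu \ge \sqrt{\cdots}$, to $\xi \delta \le \nu + \delta$, which holds because $\xi \le 1$ and $\nu \ge 0$. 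The only obstacle, such as it is, is bookkeeping: one must track the sign of $b/\delta - (\delta+\nu)/\mu$ carefully when manipulating the discriminant.

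Finally, the limiting behavior as $\xi \to 0^+$ follows by continuity of $s_-$ in $\xi$: substituting $\xi = 0$ into the $s$-coordinate of $x_3$ gives $\min\{b/\delta, (\delta+\nu)/\mu\}$, which coincides with the $s$-coordinate of $x_1$ exactly when $b/\delta \le (\delta+\nu)/\mu$ and with that of $x_2$ when $b/\delta > (\delta+\nu)/\mu$, as claimed.
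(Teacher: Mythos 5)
Your proposal is correct and is essentially the proof the paper deliberately omits ("exercises in simple if tedious algebra"): the quadratic and its roots $s_\pm$ are already derived in the text preceding the proposition, and your case analysis on the sign of $\tfrac b\delta - \tfrac{\delta+\nu}{\mu}$, the squaring arguments giving $s_+ > \tfrac b\delta$ and $0 \le s_- \le \tfrac b\delta$ for $\xi\in(0,1]$ (the latter reducing to $\xi\delta \le \nu+\delta$), and the continuity limit as $\xi\to 0^+$ supply exactly the stated claims. One minor bookkeeping slip: when $\xi=0$ and $\tfrac b\delta \le \tfrac{\delta+\nu}{\mu}$, the root equal to $\tfrac{\delta+\nu}{\mu}$ is the \emph{larger} root $s_+$ (and $s_- = \tfrac b\delta$ gives $x_1$ with $s^*=0$), so it is $s_+$, not $s_-$, whose noncompliant counterpart is nonpositive; your conclusion that only $x_1$ is physically meaningful in that regime is unaffected.
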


With one further condition on $D\mV = D\mV^- - D\mV^+$ (as defined in \eqref{eq:Vp},\eqref{eq:Vm}), we can characterize the local stability of these DFE.

\begin{theorem} \label{eq:stabilityOfDFE} We consider $x_1,x_2,x_3$ as defined in \eqref{eq:DFE12}, \eqref{eq:DFE3}. 
\begin{itemize}
\item[(i)] If $\xi = 0$, and $\frac{b}{\delta} < \frac{\delta + \nu}{\mu}$, then $x_1$ is locally asymptotically stable if $\mathscr R_0(\tfrac b \delta,0) < 1$, and unstable if $\mathscr R_0(\frac b \delta, 0) > 1$. 

\item[(ii)] If $\xi = 0$, and $\frac{b}{\delta} > \frac{\delta + \nu}{\mu}$, then $x_2$ is locally asymptotically stable if $\mathscr R_0(\tfrac{\delta + \nu}{\mu},\tfrac b \delta - \tfrac{\delta + \nu}{\mu}) < 1$, and unstable if $\mathscr R_0(\tfrac{\delta + \nu}{\mu},\tfrac b \delta - \tfrac{\delta + \nu}{\mu}) > 1$. 

\item[(iii)]If $\xi \in (0,1]$, we write $x_3 = (0,0,s_3,0,s^*_3,0)$. In this case, $x_3$ is locally asymptotically stable if $\mathscr R_0(s_3,s_3^*) < 1$, and unstable if $\mathscr R_0(s_3,s^*_3) > 1$, where $x_3 = (0,0,s_3,0,s^*_3,0).$
\end{itemize}
\end{theorem}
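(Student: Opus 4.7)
The plan is to apply the next-generation-matrix theorem of \cite{NextGenMat3} at each of $x_1, x_2, x_3$, once we verify the missing hypothesis---namely, that the non-infectious block of $D\mV$ has all eigenvalues with positive real parts. Writing the variables in the order \eqref{eq:ordering}, the Jacobian $J(\xss) = D\mF(\xss) - D\mV(\xss)$ at any $\xss = (0,0,s,0,s^*,0) \in U_{DF}$ is block lower triangular, with upper-left $2\times 2$ block $F_{s,s^*} - V_{s,s^*}$ governing the infectious compartments and lower-right $4\times 4$ block $C$ governing $(S,R,S^*,R^*)$. The spectrum of $J(\xss)$ is the union of the spectra of these two blocks, so local asymptotic stability (resp.\ instability) of $\xss$ reduces to the corresponding property for each block. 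The argument of \cite{NextGenMat3} recalled before \eqref{eq:R000} already shows that $F_{s,s^*} - V_{s,s^*}$ is Hurwitz iff $\mathscr R_0(s,s^*) < 1$ and has an eigenvalue of positive real part iff $\mathscr R_0(s,s^*) > 1$, so everything reduces to showing that $C$ is Hurwitz at each of the three DFE.

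Reordering the non-infectious variables as $(S,S^*,R,R^*)$, a direct computation shows that $C$ is itself block upper triangular with $2\times 2$ diagonal blocks
\begin{equation*}
C_{S,S^*} = \begin{pmatrix} -(\delta+\mu s^*) & \nu - \mu s \\ \mu s^* & -(\delta + \nu - \mu s) \end{pmatrix}, \qquad C_{R,R^*} = \begin{pmatrix} -(\delta+\mu s^*) & \nu \\ \mu s^* & -(\delta+\nu) \end{pmatrix}.
\end{equation*}
A short calculation yields $\operatorname{tr} C_{R,R^*} = -(2\delta + \nu + \mu s^*) < 0$ and $\det C_{R,R^*} = \delta(\delta + \nu + \mu s^*) > 0$, so $C_{R,R^*}$ is Hurwitz at any DFE. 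Introducing the single quantity $Q(s,s^*) := \delta + \nu + \mu(s^* - s)$, a second short calculation with some cancellation yields $\operatorname{tr} C_{S,S^*} = -\delta - Q$ and $\det C_{S,S^*} = \delta\, Q$, so $C_{S,S^*}$ is Hurwitz if and only if $Q(s,s^*) > 0$.

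Evaluating $Q$ at each DFE is the step where the case hypotheses enter. At $x_1 = (0,0,b/\delta,0,0,0)$, one has $Q = \delta + \nu - \mu b/\delta$, which is positive exactly under the hypothesis $b/\delta < (\delta+\nu)/\mu$ of (i). At $x_2$, plugging in $s = (\delta+\nu)/\mu$ and $s^* = b/\delta - (\delta+\nu)/\mu$ gives $Q = \mu b/\delta - (\delta+\nu)$, positive precisely under the hypothesis of (ii). At $x_3$, substituting $2s_3 = b/\delta + (\delta+\nu)/\mu - \sqrt{D}$ with $D = (b/\delta - (\delta+\nu)/\mu)^2 + 4\xi b/\mu$ into $Q$ and cancelling yields $Q = \mu\sqrt{D} > 0$ unconditionally. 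Combining these evaluations with the previous paragraph, the full Jacobian at each DFE is Hurwitz precisely when $\mathscr R_0 < 1$ and has a positive-real-part eigenvalue whenever $\mathscr R_0 > 1$, giving both the stability and the instability halves of (i), (ii), (iii).

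The main technical obstacle is the cancellation producing $Q = \mu\sqrt{D}$ at $x_3$, which requires careful algebra using the quadratic formula \eqref{eq:DFEsDeterministic}; cases (i) and (ii) are clean because there the sign of $Q$ is essentially the hypothesized inequality between $b/\delta$ and $(\delta+\nu)/\mu$, which simultaneously determines whether the second root $s_-$ of \eqref{eq:DFEsDeterministic} is physically meaningful and whether the non-infectious dynamics at the remaining equilibrium contract.
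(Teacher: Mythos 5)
Your proposal is correct, and it reaches exactly the same algebraic condition as the paper by a slightly different packaging. The paper verifies hypotheses (A1)--(A5) of \cite[Theorem 2]{NextGenMat3} and, for (A5), computes the eigenvalues of $D\mV(\xss)$, finding that everything hinges on $\lambda_6 = \delta + \nu + \mu(s^*-s) > 0$, i.e.\ condition \eqref{eq:stabCond}; your quantity $Q(s,s^*)$ is precisely this $\lambda_6$, which is no accident, since $D\mF$ vanishes outside the infectious block, so the non-infectious block of the full Jacobian is just the negative of the corresponding block of $D\mV$, and your eigenvalues $-\delta,-\delta,-(\delta+\nu+\mu s^*),-Q$ of $C$ are the negatives of the paper's $\lambda_3,\ldots,\lambda_6$. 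What you do differently is unpack the cited theorem: you exploit the block lower triangular structure of $D\mF(\xss)-D\mV(\xss)$ directly, handle the $(S,S^*)$ and $(R,R^*)$ sub-blocks by trace/determinant, and invoke \cite{NextGenMat3} only for the equivalence between $\rho(F_{s,s^*}V_{s,s^*}^{-1})<1$ (resp.\ $>1$) and the Hurwitz (resp.\ unstable) character of $F_{s,s^*}-V_{s,s^*}$. This buys a self-contained linearization argument that makes transparent why the case hypotheses in (i) and (ii) appear, at the cost of the Jacobian bookkeeping the black-box theorem would otherwise absorb. One small point to make explicit: the text before \eqref{eq:R000} only recalls the implication $\rho(F_{s,s^*}V_{s,s^*}^{-1})<1 \Rightarrow$ Hurwitz, whereas your instability half needs the converse direction ($\mathscr R_0>1$ forces an eigenvalue with positive real part); this full equivalence is indeed in \cite{NextGenMat3}, and it applies here because $F_{s,s^*}$ is entrywise nonnegative and $V_{s,s^*}$ has nonpositive off-diagonal entries with positive trace and determinant $(\gamma+\delta)(\gamma+\delta+\nu+\mu s^*)$, hence is a nonsingular M-matrix, so you should state that verification rather than leave it implicit. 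Your evaluations of $Q$ at $x_1$, $x_2$, $x_3$ (including the cancellation giving $Q=\mu\sqrt{D}$ at $x_3$) agree with the paper's case analysis.
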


\begin{proof}
    Our theorem follows from \cite[Theorem 2]{NextGenMat3}. We use the definitions of $\mathcal F, \mathcal V^{\pm}$ in \eqref{eq:mF},\eqref{eq:mV}, and let $\mathcal F_i$ denote the $i^{\text{th}}$ coordinate of $\mathcal F$ and similarly for $\mathcal V^\pm$. With this notation, assumptions (A1)-(A5) from \cite[Theorem 2]{NextGenMat3} can be written: \begin{itemize} 
    \item[(A1)] $\mathcal F, \mathcal V^{\pm}$ are component-wise nonnegative when their arguments are nonnegative.
    \item[(A2)] For each $i = 1,\ldots, 6$, $\mathcal V^-_i = 0$ when $x_i = 0$. (Recall, that the variables are ordered $x = (x_1,x_2,x_3,x_4,x_5,x_6) = (I,I^*,S,R,S^*,R^*)$.)
    \item[(A3)] For $i=3,4,5,6$, $\mathcal F_i \equiv 0$.
    \item[(A4)] If $x \in X_{DF},$ then $\mathcal F(x) = 0$ and $\mathcal V^+_i(x) =0$ for $i = 1,2.$
    \item[(A5)] At the disease free equilbrium of interest, all eigenvalues of $D\mathcal V = D \mathcal V^- - D\mathcal V^+$ have positive real part. 
    \end{itemize} Note that conditions (A1)-(A4) must hold generally. The last condition (A5) is local to the particular DFE one is analyzing. Under these conditions, \cite[Theorem 2]{NextGenMat3}, gives local asymptotic stability of the DFE when $\mathscr R_0 <1$, and instability when $\mathscr R_0 > 1.$ 

    It is easily seen that (A1)-(A4) hold for our system. The only assumption that could possibly fail is (A5). From \eqref{eq:Vp},\eqref{eq:Vm}, at a disease free point $x_{s,s^*}$ with $s,s^*\ge 0$ and $s+s^* \le \tfrac b \delta$, we have \begin{equation}
        \label{eq:Jacobian} 
        D\mathcal V(x_{s,s^*}) = \begin{bmatrix}
    \gamma + \delta + \mu s^* & -\nu & 0 & 0 & 0 & 0 \\
    -\mu s^* &\gamma + \delta + \nu &  0 & 0 & 0 & 0 \\
    \beta (1-\alpha)^2s & (\beta(1-\alpha)+\mu)s& \delta + \mu s^* & 0 & \mu s-\nu & \mu s \\
    -\gamma & 0 & 0 & \delta + \mu s^* & 0 & -\nu\\
    \beta (1-\alpha)s^* & \beta s^*-\mu s & -\mu s^* & 0 & \delta + \nu-\mu s & -\mu s \\
    0 & -\gamma  & 0 & -\mu s^* & 0 & \delta + \nu
\end{bmatrix}.
    \end{equation} By zooming on the first principle $2\times 2$ submatrix in \eqref{eq:Jacobian}, we observe that there are two eigenvalues of $[D\mathcal V(x_{s,s^*})]^T$ which have eigenvectors of the form $(z_1,z_2,0,0,0,0)$. These must also be eigenvalues of $D\mathcal V(x_{s,s^*})$ and can be easily found to be \begin{equation}\label{eq:eig12}\lambda_1 = \gamma + \delta, \hspace{1cm} \lambda_2 = \gamma + \delta + \nu + \mu s^*.\end{equation} The other eigenvalues of $D\mathcal V(x_{s,s^*})$ have eigenvectors of the form $(0,0,z_3,z_4,z_5,z_6)$, and can be found by zooming into the last principal $4\times 4$ submatrix. They are given by \begin{equation} \label{eq:eig36}
    \lambda_3 = \delta, \,\,\,\,\,\,\,\,\,\,  \lambda_4 = \delta, \,\,\,\,\,\,\,\,\,\,  \lambda_5 =  \delta + \nu + \mu s^*, \,\,\,\,\,\,\,\,\,\,  \lambda_6 = \delta + \nu + \mu (s^*-s).
    \end{equation} Eigenvalues $\lambda_1,\ldots,\lambda_5$ are positive, so (A5) is reduced to the condition that $\lambda_6 > 0$ which is equivalent to \begin{equation} \label{eq:stabCond}
    s - s^* < \frac{\delta + \nu}{\mu}.
    \end{equation} We consider the DFE values in \eqref{eq:DFE12},\eqref{eq:DFE3}. 
    
    For $\xi = 0$ and $x_1 = (0,0,b/\delta,0,0,0)$, \eqref{eq:stabCond} reduces to $\frac{b}{\delta} < \frac{\delta + \nu}{\mu}$. Under this condition we have local asymptotic stability of $x_1$ when $\mathscr R_0 (\tfrac b \delta,0) <1,$ and instability when $\mathscr R_0(\tfrac b \delta,0)>1$.

    For $\xi = 0$ and $x_2 = (0,0,\tfrac{\delta + \nu}{\mu},0,\tfrac b \delta - \tfrac{ \delta + \nu}\mu,0)$, \eqref{eq:stabCond} reduces to $$\left(\frac{\delta + \nu}{\mu} - \left(\frac b \delta - \frac{\delta + \nu}{\mu}\right)\right) < \frac{\delta + \nu}{\mu} \,\,\,\,\, \Longleftrightarrow \,\,\,\,\, \frac b \delta > \frac {\delta + \nu}\mu.$$ Under this condition, we have local asymptotic stability of $x_2$ when $\mathscr R_0(\tfrac{\delta + \nu}{\mu},\tfrac b \delta - \tfrac{\delta + \nu}{\mu}) < 1$, and instability when $\mathscr R_0(\tfrac{\delta + \nu}{\mu},\tfrac b \delta - \tfrac{\delta + \nu}{\mu}) > 1$.

    For $\xi \in (0,1]$ and $x_3$ as defined in \eqref{eq:DFE3}, \eqref{eq:stabCond} reduces to $$\frac{\delta + \nu}{\mu} - \sqrt{\left(\frac{b}{\delta} - \frac{\delta + \nu}{\mu} \right)^2 + \frac{4\xi b}{\mu}} < \frac{\delta + \nu}{\mu},$$ which holds for any positive values of the parameters, so writing $x_3 = (0,0,s_3,0,s_3^*,0),$ where $s_3,s^*_3$ are as defined in \eqref{eq:DFE3}, $x_3$ is locally asymptotically stable when $\mathscr R_0(s_3,s^*_3)<1$, and unstable when $\mathscr R_{0}(s_3,s_3^*)>1$. \end{proof}

    \begin{remark} \label{rem:Thm25}
    In particular, theorem \ref{eq:stabilityOfDFE}(iii) addresses the special case of $\xi = 1$ and $\nu = 0$, which could be thought of as the worst case scenario, where all newly introduced members of the population are noncompliant and noncompliance is a permanent state. In this case, $x_3 = (0,0,0,0,\tfrac b \delta,0)$ so that the entire population is noncompliant at the DFE, and $\mathscr R_0(0,\tfrac b \delta) = \tfrac b \delta \cdot \frac \beta{\gamma + \delta}$ which is the same reproductive ratio as in \eqref{eq:basicSIR}. This is the case where governmental protocols have no bearing because in the long run, no one will comply with them. 
    \end{remark}

\section{An SIR model with noncompliant behavior and stochastic perturbation} \label{sec:stoch1}

We now present a version of \eqref{eq:SIRwithCompliance} wherein there is uncertainty in the infection rates $\beta$ and $\mu$ for the disease and noncompliance, respectively. In what follows, we let $W = W(t)$ be a a scalar Brownian motion defined on a complete probability space $(\Omega,\mathscr F, \{\mathscr F_t\}_{t\ge 0}, P\}$ where the filtration $\{\mathscr F_t\}_{t\ge 0}$ is increasing and right continuous and $\mathscr F_0$ contains all $P$-null sets.

In essence, we would like to replace $\beta$ with $\beta + \sigma_\beta \dot W$ where $\sigma_\beta \ge 0$ is the level of uncertainty in $\beta$. Similarly, we would like to replace $\mu$ with $\mu +\sigma_\mu \dot W$ for some level of uncertainty $\sigma_\mu\ge 0$. However, for technical reasons involving the proof of nonnegativity and global existence of solutions, it is desirable to ensure that the uncertainty in any given equation is proportional to the population which the equation describes. Thus we consider the following dynamics: \begin{equation} \label{eq:StochSIR}
\begin{split}
	dS &= ((1-\xi) b - \beta(1-\alpha)SI^{(M)} -\mu SN^* + \nu S^* - \delta S)dt+(-\sigma_\beta(1-\alpha)^2SI -\sigma_\mu SS^*)dW,\\
	dI &= (\beta(1-\alpha)SI^{(M)} - \gamma I -\mu I N^* + \nu I^* - \delta I)dt +(\sigma_\beta(1-\alpha)^2SI -\sigma_\mu II^*)dW, \\
    dR &= (\gamma I - \mu RN^* + \nu R^* - \delta R)dt - \sigma_\mu RR^*dW,\\
    dS^* &= (\xi b - \beta S^* I^{(M)} + \mu SN^* - \nu S^* -\delta S^*)dt +(-\sigma_\beta S^* I^* + \sigma_\mu SS^*)dW,\\
	dI^* &= (\beta S^* I^{(M)} - \gamma I^*+ \mu IN^* - \nu I^* -\delta I^*)dt +(\sigma_\beta S^* I^* + \sigma_\mu I I^*)dW,\\
	dR^* &= (\gamma I^* + \mu R N^* - \nu R^* - \delta R^*)dt +\sigma_\mu RR^* dW. 
	\end{split}
 \end{equation}
 From a modeling perspective, one may prefer to replace each of $(1-\alpha)^2SI, S^* I^*$ in the stochastic terms with $(1-\alpha)SI^{(M)}, S^* I^{(M)}$ (and similarly for stochastic terms involving $\sigma_\mu$). We make these particular modeling decisions so that that stochastic term in each equation is proportional to the population itself, which is necessary in \eqref{eq:bound3} in the proof of theorem \ref{t.w09271} below. We note that, besides the proof of theorem \ref{t.w09271}, this choice has no effect, so that theorems of section \ref{sec:stoch2} still hold for the full system where $(1-\alpha)^2SI, S^* I^*$ in the stochastic terms are replaced with $(1-\alpha)SI^{(M)}, S^* I^{(M)}$, with the caveat that one must assume global existence of a solution which is eventually nonnegative. This is because in the proof of theorems~\ref{thm:E0expMSS} and \ref{thm:WorstCase}, the only bound that we use for the stochastic terms is the total population bound $\tfrac b \delta$ which works just as well for the full stochastic terms as for the reduced ones. Further, with or without this modification, system \eqref{eq:StochSIR} captures the same behavior in the asymptotic regimes studied in section \ref{sec:stoch2} since those regimes alternately involve the whole population becoming compliant so that $(1-\alpha)SI^{(M)} \to (1-\alpha)^2SI$, the whole population becoming noncompliant so that $S^*I^{(M)} \to S^*I^*$, and similar observations can be made regarding the stochastic terms involving spread of noncompliance.
 
 We note that adding together all equations from \eqref{eq:StochSIR} shows that the total population $N_{\text{total}} = S+I+R+S^*+I^*+R^*$ is deterministic and satisfies the same bound derived above from \eqref{eq:totalPop} and \eqref{eq:totalPopBound}. For our purposes, we will be considering positive, \emph{deterministic} initial data $(S_0, I_0, R_0, S^*_0, I^*_0, R^*_0)$. The assumption that initial data is deterministic is necessary for some of stability results below. Given this, we have the following result. 

\begin{theorem}\label{t.w09271}
    For any positive initial data $(S_0, I_0, R_0, S^*_0, I^*_0, R^*_0)$, there exists a unique, globally-defined, nonnegative solution $(S, I, R, S^*, I^*, R^*)$ of system \eqref{eq:StochSIR} with probability 1.
\end{theorem}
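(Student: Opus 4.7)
The plan is to follow the Lyapunov-function template used for stochastic SIR-type systems in \cite{Gray,JJS,vetro}, adapted to the six-compartment setting here. The drift and diffusion coefficients of \eqref{eq:StochSIR} are locally Lipschitz on the open orthant $(0,\infty)^6$, so classical SDE theory yields a unique strong solution $X(t)=(S,I,R,S^*,I^*,R^*)(t)$ with values in $(0,\infty)^6$ up to an explosion time $\tau_e$. Showing $\tau_e = \infty$ almost surely will therefore give both global existence and strict positivity simultaneously. For each integer $n$ large enough that every component of the initial datum lies in $(1/n, n)$, define
\[
\tau_n = \inf\bigl\{t \in [0,\tau_e) \,:\, \min_i X_i(t) \le 1/n \text{ or } \max_i X_i(t) \ge n\bigr\},
\]
so $\tau_n$ increases to some $\tau_\infty \le \tau_e$, and it suffices to prove $\tau_\infty = \infty$ almost surely.

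Consider the Lyapunov function
\[
V(X) = \sum_{i=1}^{6}\bigl(X_i - 1 - \ln X_i\bigr),
\]
which is nonnegative on $(0,\infty)^6$ and blows up as any component tends to $0$ or $\infty$. Applying It\^o's formula and using that $V$ is separable,
\[
\mathcal{L}V(X) = \sum_{i=1}^6 \Bigl(1 - \tfrac{1}{X_i}\Bigr) f_i(X) + \frac{1}{2}\sum_{i=1}^6 \frac{g_i(X)^2}{X_i^2},
\]
where $f_i$ and $g_i$ are the drift and diffusion coefficients of the $i$-th equation of \eqref{eq:StochSIR}. The key structural observation is that each $g_i(X)$ is a linear combination of terms each proportional to $X_i$; for example, the diffusion in the $I$-equation is $I\bigl(\sigma_\beta(1-\alpha)^2 S - \sigma_\mu I^*\bigr)$ and the diffusion in the $R$-equation is $-\sigma_\mu R R^*$. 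Therefore $g_i(X)^2/X_i^2$ is bounded by a constant depending only on $\sigma_\beta,\sigma_\mu,\alpha$ and the deterministic total-population bound $N_{\text{total}}(t) \le \max\{1, b/\delta\}$ recorded in \eqref{eq:totalPopBound}. This is exactly the reason, discussed by the authors immediately after \eqref{eq:StochSIR}, for placing the stochastic perturbations in a form proportional to the variable whose equation they sit in.

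For the drift portion, the telescoped sum $\sum_i f_i(X) = b - \delta N_{\text{total}}$ is bounded, and for $-\sum_i f_i(X)/X_i$ each $f_i$ splits as (a) nonnegative inflow terms (birth, mass-action gain, and transfer due to noncompliance or recovery from it) and (b) negative outflow terms proportional to $X_i$. After division by $-X_i$, every outflow term contributes a bounded constant, while every inflow term is manifestly nonpositive. Hence $\mathcal{L}V(X) \le K$ for some constant $K$ independent of $n$ and of $X$ on $[0,\tau_n\wedge T]$. It\^o's formula and optional stopping then yield
\[
\mathbb{E}\bigl[V(X(\tau_n \wedge T))\bigr] \le V(X_0) + K T
\]
for every $T>0$. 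On the event $\{\tau_n \le T\}$, some component of $X(\tau_n)$ equals $1/n$ or $n$, so $V(X(\tau_n)) \ge \min\{n-1-\ln n,\; \tfrac{1}{n} - 1 + \ln n\} \to \infty$ as $n \to \infty$. The standard Chebyshev-style argument then forces $P(\tau_\infty \le T) = 0$ for every $T>0$, which gives $\tau_\infty = \infty$ almost surely.

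The main obstacle is the bookkeeping needed to bound $\mathcal{L}V$ uniformly: one must check case by case that each positive inflow term in each of the six equations becomes nonpositive after being divided by $-X_i$, and that the proportional-to-$X_i$ structure of $g_i$ exactly cancels the $1/X_i^2$ factor in the It\^o correction. Once this verification is completed, the remainder of the argument proceeds by the routine stopping-time template.
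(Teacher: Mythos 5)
Your proposal is correct and follows essentially the same route as the paper's proof: the stopping-time construction, the Lyapunov function $\sum_i (X_i-1-\ln X_i)$, the uniform bound on $\mathcal LV$ using the deterministic total-population bound together with the fact that each diffusion coefficient is proportional to its own compartment, and the blow-up-of-$V$ contradiction (the paper phrases the final step as a contradiction with $P\{\tau_k\le T\}\ge\eps$ rather than a direct Chebyshev estimate, which is an immaterial difference).
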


\begin{proof}
    The proof uses the same basic strategy as \cite{Gray,JJS}.
    
    Since all terms in \eqref{eq:StochSIR} are locally Lipschitz with respect to $(S,I,R,S^*,I^*,R^*)$, given  positive initial data, a local solution of \eqref{eq:StochSIR} exists on a time interval $[0,\tau_E)$. We'd like to prove that this solution remains bounded (hence exists globally) and nonnegative. To do so, starting with $k_0 \in \N$ such that $$\max\{S_0,I_0,R_0,S_0^*,I_0^*,R_0^*\} \le k_0, \,\,\,\,\, \min\{S_0,I_0,R_0,S_0^*,I_0^*,R_0^*\} \ge \frac 1 {k_0},$$ we define $$\tau_k = \inf\left\{t \in [0,\tau_E) : \max\{S,I,R,S^*,I^*,R^*\} \ge k \,\,\, \text{ or } \,\,\, \min\{S,I,R,S^*,I^*,R^*\} \le \tfrac 1 k\right\}$$ for all $k \ge k_0.$ Then $\tau_k$ is an increasing sequence. Further if $\tau_k \to \infty$ almost surely as $k\to \infty$, then the solution almost surely exists and remains nonnegative for all time. 

    Suppose that it is not true that $\tau_k\to \infty$ almost surely. Then \begin{equation} \label{eq:contraAssump}
    \exists T > 0, \eps \in (0,1), k^* \ge k_0 \,\,\, \text{ such that } \,\, P\{\tau_k \le T \} \ge \eps \,\, \text{ for } \, k \ge k^*.
    \end{equation} We let $\overline N$ be a bound on the total population and fix $k \ge k^*$. Since all populations are nonnegative up to time $\tau_k$, each of the subpopulations is also bounded by $\overline N$.  

    Define $$V(t) = \sum_{A \in \{S,I,R,S^*,I^*,R^*\}} (A(t)-1-\log A(t)).$$ From the inequality $a - 1\ge \log (a)$ for $a > 0$, we see that $V$ is nonnegative. Further $V \to \infty$ if any of the populations blows up or goes to zero. Abstractly, we write each equation from \eqref{eq:StochSIR} as $$dA = F_Adt + \Sigma_A dW,$$ where $F_A$ represents the underlying dynamics and $\Sigma_A$ represents the stochastic drift. By It\^o's formula \cite[Ch. 2]{pardoux}, we have \begin{equation} \label{eq:dV} dV = \sum_{A \in \{S,I,R,S^*,I^*,R^*\}} \left[\left(1 - \frac{1}{A}\right)F_A + \frac{\Sigma_A^2}{2A^2}\right]dt + \left(1-\frac 1 A\right)\Sigma_AdW.\end{equation}

    Operating on the time interval $[0,\tau_k)$, we now simplify and bound \eqref{eq:dV} term-by-term, remembering that populations are nonnegative on this time interval. In what follows $C$ is a positive constant that changes from line-to-line and depends on the ambient parameters $b,\delta,\alpha,\beta,\gamma,\mu,\nu.$  Looking at \eqref{eq:StochSIR}, we have \begin{equation} \label{eq:bound1}
    \begin{split}
        \sum_{A \in \{S,I,R,S^*,I^*,R^*\}} \left(1-\frac{1}{A}\right)F_A &= b-\delta N_{\text{total}} \\ &\hspace{6mm}+ \left(-\frac{(1-\xi)b}{S} + \beta (1-\alpha)I^{(M)} + \mu N^* - \nu \frac{S^*}{S} + \delta \right)\\ 
        &\hspace{6mm}+\left(-\frac{\beta(1-\alpha)SI^{(M)}}{I} + \gamma + \mu N^* - \nu\frac{I^*}{I} + \delta\right) \\ 
        &\hspace{6mm}+\left(-\gamma \frac I R + \mu N^* -\nu \frac{R^*}{R} + \delta \right) \\
        &\hspace{6mm}+ \left(-\frac{\xi b}{S^*} + \beta I^{(M)} -\mu \frac{SN^*}{S^*} + \nu + \delta \right)\\
        &\hspace{6mm}+ \left( -\beta\frac{S^*I^{(M)}}{I^*} + \gamma - \mu\frac{IN^*}{I^*} + \nu + \delta \right)\\
        &\hspace{6mm}+ \left(-\gamma\frac{I^*}{R^*} - \mu \frac{RN^*}{R^*} + \nu + \delta\right) \\
        &\le b + \beta(2-\alpha)I^{(M)} + 2 \gamma +3\mu N^* +3\nu + 6 \delta \le C(1+\overline N).
    \end{split}
\end{equation} Next, \begin{equation} \label{eq:bound2}
\begin{split}
\sum_{A \in \{S,I,R,S^*,I^*,R^*\}} \frac{\Sigma_A^2}{2A^2} 
&= \frac 1 2 \bigg[ (-\sigma_\beta (1-\alpha)I - \sigma_\mu S^*)^2 + (\sigma_\beta(1-\alpha)S - \sigma_\mu I^*)^2 +\sigma_\mu^2(R^*)^2 \\
&\hspace{15mm}+ (-\sigma_\beta I^* + \sigma_\mu S)^2 + (\sigma_\beta S^* + \sigma_\mu I)^2 + \sigma_\mu^2 R^2\bigg] \\
&\le C\overline N^2.
\end{split}
\end{equation} And lastly, \begin{equation}
    \label{eq:bound3}\begin{split}
        \sum_{A \in \{S,I,R,S^*,I^*,R^*\}} \left(1-\frac 1 A\right)\Sigma_A &= \left(1-\frac 1 S\right)(-\sigma_\beta (1-\alpha)SI -\sigma_\mu SS^*) \\
        &\hspace{6mm}+ \left(1-\frac 1 I\right)(\sigma_\beta(1-\alpha) SI - \sigma_\mu II^*) \\ 
        &\hspace{6mm} +\left(1-\frac 1 {S^*}\right)(-\sigma_\beta S^*I^* + \sigma_\mu SS^*) \\
        &\hspace{6mm}+ \left(1 - \frac{1}{I^*}\right) (\sigma_\beta S^*I^* + \sigma_\mu II^*) \\
        &\hspace{6mm} + \left(1-\frac 1 R\right)(-\sigma_\mu RR^*)+ \left(1-\frac1{R^*}\right)\sigma_\mu RR^*\\
        &= \sigma_\beta((1-\alpha)(I-S) + I^*-S^*) \\ &\hspace{6mm}+ \sigma_\mu(S^*+I^*+R^* - S - I -R)\\ 
        &\backdefeq f(S,I,R,S^*,I^*,R^*).
    \end{split}
\end{equation} Inserting \eqref{eq:bound1}-\eqref{eq:bound3} into \eqref{eq:dV}, we see \begin{equation} \label{eq:dVbound}
dV \le C\left(1+\overline N + \overline N^2\right)dt + L(S,I,R,S^*,I^*,R^*)dW
\end{equation} where $f(S,I,R,S^*,I^*,R^*)$ is a linear function as defined at the end of \eqref{eq:bound3}. For brevity, we set $M = C(1+\overline N + \overline N^2)$.

For any $s \le T$, set $t = \min(\tau_k,s)$, \eqref{eq:dVbound} yields \begin{equation}\label{eq:dVintbound1}
    \int^t_0 dV \le \int^t_0 M dt + \int^t_0 f(S,I,R,S^*,I^*,R^*)dW.
\end{equation} Since $f$ is bounded for times in $[0,\tau_k)$, in expectation the stochastic integral disappears and we have \begin{equation} \label{eq:Vexbound}
    \E\Big[V(t)\Big] \le V(0) + MT. 
\end{equation} By the assumption that $(S_0,I_0,R_0,S^*_0,I^*_0,R^*_0)$ are positive constants, we see that $V(0)$ is a positive constant. Finally, define $\Omega_k = \{\tau_k \le T\}$, so that $P(\Omega_k) \ge \eps$ by our assumption \eqref{eq:contraAssump}. We see that for any $\omega \in \Omega_k$, one of the subpopulations must take value $k$ or $1/k$ at time $\tau_k$ which means that \begin{equation} \label{eq:lowerVbound}
V(\tau_k;\omega) \ge \min\left(k-1-\log k, \frac 1 k - 1 + \log k\right)
\end{equation} But then \eqref{eq:Vexbound} and $P(\Omega_k) \ge \eps$ results in \begin{equation} \label{eq:contraBound} \begin{split} V(0) + MT &\ge \E\Big[ 1_{\Omega_k} V(\tau_k;\omega)  \Big] \ge \min\left(k-1-\log k, \frac 1 k - 1 + \log k\right). \end{split} \end{equation} Sending $k \to \infty$ gives a contradiction. Thus \eqref{eq:contraAssump} is false, and $\lim_{k\to\infty} \tau_k =\infty$ almost surely, proving that solutions exist and remain nonnegative globally-in-time with probability 1. \end{proof} 

With global existence and positivity in hand, we turn to large-time asymptotic behavior of \eqref{eq:StochSIR}.

\section{Asymptotic behavior of the stochastic system near disease free states} \label{sec:stoch2}

{\CPEDIT In this section, we perform some asymptotic analysis of \eqref{eq:StochSIR} with respect to certain disease free states. In particular, we prove stability results for \eqref{eq:StochSIR} analogous to \ref{eq:stabilityOfDFE}(i) and the ``worst case scenario" DFE addressed as part of theorem \ref{eq:stabilityOfDFE}(iii) and discussed in remark \ref{rem:Thm25}. Besides these, we would like to say something about the behavior of solutions to \eqref{eq:StochSIR} near the disease free states $X_{s,s^*} = (s,0,0,s^*,0,0)$ where $s,s^*$ are the non-zero elements of $x_3$ as defined in \eqref{eq:DFE3}. These are not equilibria for the stochastic model but we would like to quantify how far the stochastic drift can push the dynamics away from these states if the stability conditions of theorem \ref{eq:stabilityOfDFE}(ii) or \ref{eq:stabilityOfDFE}(iii) are met. }

Our first two stability results follow from lemma \ref{lem:expMSS} which is a special case of Theorem 4.4 in \cite[Ch. 4]{MaoBook}. Before introducing the lemma, we establish some preliminary notation. Consider the stochastic ODE in $d$-dimensions \begin{equation} \label{eq:stochODEGeneral}
    dX = F(X)dt + G(X)dW
\end{equation} where $F,G:\R^d \to \R^d$ are twice-differentiable and $W$ is a one-dimensional Wiener process. In the case where $F(0) = G(0) = 0$, we say that $X \equiv 0$ is the trivial solution of \eqref{eq:stochODEGeneral}. We define the operator $L$ associated with \eqref{eq:stochODEGeneral} by its action on any smooth function $V(x)$: $$LV(x) = F(x)\cdot \nabla V(x) + \tfrac 1 2 G(x) \cdot D^2V(x) G(x).$$ By It\^o's formula \cite[Ch. 2]{pardoux}, assuming $X$ satisfies \eqref{eq:stochODEGeneral}, we have $$dV(X) = LV(X)dt + G(X)\cdot \nabla V(X) dW.$$ Because of this, we have Lyapunov stability results for stochastic ODE using conditions on $LV(x)$ that are analogous to the results from deterministic ODE which depend on $\tfrac d {dt}V(x(t)).$ Specifically, we use the following result, which holds when the initial data is deterministic, which we assumed for our system above.

\begin{lemma}  \label{lem:expMSS} \cite[Ch. 4, Thm. 4.4]{MaoBook} Assume that there is a non-negative function $V \in C^{2}(R^d)$ and positive constants $A,B,C$, such that
$$
A|x|^2 \leq V(x) \leq B|x|^2 \quad \text { and } LV(x) \leq -C V(x)
$$
for all $x \in \R^d$. Then the trivial solution of \eqref{eq:stochODEGeneral} is exponentially mean-square stable. Specifically, if $X$ satisfies \eqref{eq:stochODEGeneral}, we have $$\mathbb E\big(\abs{X}^2\big) \le \frac B A \abs{X_0}^2 e^{-Ct}$$
\end{lemma}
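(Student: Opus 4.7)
The plan is to reduce the claim to a standard supermartingale/Gr\"onwall argument via It\^o's formula applied to the auxiliary process $e^{Ct}V(X_t)$. First, since $V \in C^2(\R^d)$ and $X$ satisfies \eqref{eq:stochODEGeneral}, applying It\^o's formula to $V(X_t)$ yields
$$dV(X_t) = LV(X_t)\,dt + \nb V(X_t) \cdot G(X_t)\,dW_t.$$
Applying the product rule (equivalently, It\^o on the function $(t,x)\mapsto e^{Ct}V(x)$) then gives
$$d\bigl(e^{Ct}V(X_t)\bigr) = e^{Ct}\bigl(CV(X_t) + LV(X_t)\bigr)\,dt + e^{Ct}\nb V(X_t) \cdot G(X_t)\,dW_t.$$
By the hypothesis $LV(x) \leq -CV(x)$, the $dt$ coefficient is nonpositive almost surely.

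Next, I would take expectations to obtain $\E[e^{Ct}V(X_t)] \leq V(X_0)$, but this requires care since the stochastic integral is only known to be a local martingale a priori. The plan is to localize with the stopping times $\tau_n = \inf\{t \geq 0 : \abs{X_t} \geq n\}$. On $[0, t \wedge \tau_n]$, the process $X$ is bounded, so by the smoothness of $V$ and $G$ the integrand $\nb V(X) \cdot G(X)$ is bounded and the stopped stochastic integral is a true martingale with zero expectation. This yields
$$\E\bigl[e^{C(t \wedge \tau_n)} V(X_{t \wedge \tau_n})\bigr] \leq V(X_0).$$
Since $V \geq 0$, Fatou's lemma then allows one to pass to the limit $n \to \infty$, provided $\tau_n \to \infty$ almost surely (i.e.\ non-explosion of $X$); this latter fact actually follows from the Lyapunov sandwich $V(x) \geq A\abs{x}^2$ together with the supermartingale bound, so non-explosion can be established as part of the same argument. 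The conclusion is $\E[e^{Ct}V(X_t)] \leq V(X_0)$.

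Finally, using the two-sided bound $A\abs{x}^2 \leq V(x) \leq B\abs{x}^2$, I would conclude
$$A\,\E\bigl[\abs{X_t}^2\bigr] \leq \E\bigl[V(X_t)\bigr] \leq e^{-Ct} V(X_0) \leq B\,e^{-Ct}\abs{X_0}^2,$$
which rearranges to the stated mean-square decay estimate $\E[\abs{X_t}^2] \leq \tfrac{B}{A}\abs{X_0}^2 e^{-Ct}$.

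The main obstacle is the localization/integrability step: justifying that the martingale term drops out in expectation when $\nb V \cdot G$ need not be globally bounded. The cleanest workaround is the stopping-time truncation described above, and the quadratic lower bound on $V$ conveniently doubles as a non-explosion certificate, so no additional hypotheses on $F, G$ (such as linear growth) are needed beyond what is already assumed. Everything else is mechanical application of It\^o's formula and Fatou's lemma.
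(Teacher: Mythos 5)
Your argument is correct: the exponential-in-time Lyapunov trick ($e^{Ct}V(X_t)$ via It\^o, drift killed by $LV\le -CV$, localization with $\tau_n$, Fatou, then the quadratic sandwich $A|x|^2\le V\le B|x|^2$, with the lower bound also yielding non-explosion) is exactly the standard proof of this result. The paper itself gives no proof — it cites Mao's book — and your write-up is essentially the same argument as in that reference, so there is nothing to add.
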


For the following theorem, we assume that $\xi = 0$ and define a disease extinction threshold corresponding to the fully compliant disease free equilibrium $X_1 = (\frac{b}{\delta}, 0,0,0,0,0)$ by
\begin{equation}\label{eq:stochR01}
    \mathscr R_0^\sigma(\tfrac b \delta,0) =\frac{\beta\Big(\tfrac b \delta \Big)(1-\alpha)^2 + \tfrac{\sigma^2_\beta}2\Big(\tfrac b \delta\Big)^2(1-\alpha)^4}{\gamma+\delta}.
\end{equation}

\begin{theorem} \label{thm:E0expMSS}
      Suppose $\xi = 0$. If $$\frac{b}{\delta} + \frac{\sigma^2_\mu}{2\mu}\left(\frac b \delta \right)^2 < \frac{\nu+\delta}{\mu} \,\,\,\,\, \text{ and } \,\,\,\,\, \mathscr R^\sigma_0(\tfrac b \delta,0) < 1,$$ then the solution  $X_1 = (\tfrac b \delta,0,0,0,0,0)$ of system \eqref{eq:StochSIR} is exponentially mean-square stable.
\end{theorem}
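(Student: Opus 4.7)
The plan is to apply Lemma~\ref{lem:expMSS} after translating the equilibrium $X_1 = (\tfrac b \delta, 0, 0, 0, 0, 0)$ to the origin via the change of variable $\widetilde S = S - \tfrac b \delta$. In the shifted variables $(\widetilde S, I, R, S^*, I^*, R^*)$, the drift and diffusion coefficients of \eqref{eq:StochSIR} vanish at the origin, so $X_1$ corresponds to the trivial solution. Working within the compact invariant region $N_{\text{total}} \le \tfrac b \delta$ guaranteed by \eqref{eq:totalPopBound} and theorem~\ref{t.w09271}, I would try a weighted quadratic Lyapunov function
\begin{equation*}
V = c_1 \widetilde S^2 + c_2 I^2 + c_3 (I^*)^2 + c_4 (S^*)^2 + c_5 R^2 + c_6 (R^*)^2,
\end{equation*}
with weights $c_i > 0$ to be chosen. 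Any such $V$ automatically satisfies the bounds $A|x|^2 \le V(x) \le B|x|^2$ with $A = \min_i c_i$ and $B = \max_i c_i$.

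Applying the generator $L$ compartment by compartment, I expect the on-diagonal coefficient in the $I^2$ direction to reduce to
\begin{equation*}
-2c_2\Bigl[(\gamma + \delta) - \beta(1-\alpha)^2 \tfrac b \delta - \tfrac{\sigma_\beta^2}{2}(1-\alpha)^4 \bigl(\tfrac b \delta\bigr)^2 \Bigr],
\end{equation*}
which is strictly negative precisely by the hypothesis $\mathscr R_0^\sigma(\tfrac b \delta, 0) < 1$. Similarly, the on-diagonal coefficient in the $(S^*)^2$ direction becomes
\begin{equation*}
-2c_4\Bigl[(\nu + \delta) - \mu \tfrac b \delta - \tfrac{\sigma_\mu^2}{2} \bigl(\tfrac b \delta\bigr)^2 \Bigr],
\end{equation*}
which is strictly negative by the second hypothesis. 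The remaining four compartments $\widetilde S, I^*, R, R^*$ inherit on-diagonal decay with rates $\delta,\, \gamma + \nu + \delta,\, \delta,\, \nu + \delta$ respectively, and require no additional assumption.

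The main obstacle will be controlling the off-diagonal couplings that appear in $LV$. The mass-action infection $\beta(1-\alpha) S I^*$ produces an $I \cdot I^*$ cross term; the compliance-transfer $\mu S N^*$ produces $\widetilde S \cdot S^*$, $\widetilde S \cdot I^*$, $\widetilde S \cdot R^*$, $S^* \cdot I^*$, and $S^* \cdot R^*$ cross terms; and the recoveries $\gamma I$ and $\gamma I^*$ couple $I$ with $R$ and $I^*$ with $R^*$. Each cross term $xy$ can be absorbed via Young's inequality $2|xy| \le \epsilon x^2 + \epsilon^{-1} y^2$, at the cost of shrinking the on-diagonal negative coefficients above. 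Because both hypotheses are strict inequalities, there is slack to choose $\epsilon$ small and tune the weights $c_i$ (with $c_2$ and $c_4$ relatively large) so that all off-diagonal contributions are dominated. Genuinely cubic residuals such as $\widetilde S \cdot S^* \cdot N^*$ and cross stochastic contributions like $\sigma_\mu S S^*$ are bounded by quadratics with constants proportional to the population bound $\tfrac b \delta$, and the It\^o second-derivative terms $\tfrac 1 2 \Sigma_A^2$ are already incorporated in the sharp thresholds appearing in $\mathscr R_0^\sigma(\tfrac b \delta, 0)$ and in the second hypothesis. Once this algebra closes to give $LV \le -CV$ uniformly on the invariant region, Lemma~\ref{lem:expMSS} yields the claimed exponential mean-square stability; the delicate part is verifying that the weights can be chosen simultaneously to dominate every coupling while preserving the stated sharp thresholds.
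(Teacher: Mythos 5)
Your overall skeleton (shift $S$ by $b/\delta$, build a quadratic Lyapunov function, verify $A|x|^2\le V\le B|x|^2$ and $LV\le -CV$, invoke Lemma~\ref{lem:expMSS}) matches the paper, but the diagonal ansatz $V=\sum_i c_i x_i^2$ has a genuine gap, and it sits exactly at the step you defer to ``tuning the weights.'' The problem is the noncompliant infectious direction. From the $I^*$ equation, $L\big((I^*)^2\big)$ contains $+2\beta S^*\big((1-\alpha)I+I^*\big)I^*$ and the quadratic variation $\big(\sigma_\beta S^*I^*+\sigma_\mu II^*\big)^2$; on the invariant region these are only bounded by $2\beta\tfrac b\delta (I^*)^2$\,(plus cross terms) and $(\sigma_\beta+\sigma_\mu)^2\big(\tfrac b\delta\big)^2(I^*)^2$. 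Neither hypothesis controls these: $\mathscr R_0^\sigma(\tfrac b\delta,0)<1$ only bounds the $(1-\alpha)$-\emph{reduced} rates $\beta(1-\alpha)^2\tfrac b\delta+\tfrac{\sigma_\beta^2}{2}(1-\alpha)^4(\tfrac b\delta)^2$ by $\gamma+\delta$, and the first hypothesis involves only $\sigma_\mu$. So your claim that $I^*$ ``inherits on-diagonal decay with rate $\gamma+\nu+\delta$ requiring no additional assumption'' is false away from the equilibrium (take $\alpha$ near $1$ and $\sigma_\beta$ large: both hypotheses hold, yet the $(I^*)^2$ coefficient is positive), and Lemma~\ref{lem:expMSS} needs $LV\le -CV$ on the whole region, not just near $X_1$ --- indeed the theorem is a global statement. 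In a purely diagonal $V$ there is no other source of negativity in the $(I^*)^2$ direction: Young's inequality only \emph{adds} positive $(I^*)^2$ contributions when absorbing cross terms (e.g.\ the $S^*I^*$ terms coming from $\mu SN^*$ in the $S^*$ equation scale with $c_4$, so making $c_4$ large to shelter $(S^*)^2$ pushes an uncontrolled multiple of $c_4$ back onto $(I^*)^2$, while sheltering $(I^*)^2$ requires $c_3\gg c_4$ --- the weight constraints are circular and cannot be met under the stated hypotheses alone). Your stated diagonal coefficients for $I$ and $S^*$ are also not what the It\^o correction gives, since the diffusion coefficients $\sigma_\beta(1-\alpha)^2SI-\sigma_\mu II^*$ and $-\sigma_\beta S^*I^*+\sigma_\mu SS^*$ are sums, so squaring produces extra $\sigma_\beta\sigma_\mu$ and $\sigma_\mu^2$ (resp.\ $\sigma_\beta^2$) pieces beyond the ``sharp'' thresholds you quote; these are absorbable, but only once the $I^*$ problem is solved.

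The paper's fix is precisely a non-diagonal grouping. Its Lyapunov function \eqref{eq:stochLyapunovFunc} contains $c_2(n^*)^2$ with $n^*=S^*+I^*+R^*$: summing the three starred equations cancels the $\beta$ mass-action and $\gamma$ recovery transfers internally, so $d(n^*)$ has drift $\big(\mu(S+I+R)-(\nu+\delta)\big)n^*$ and diffusion $\sigma_\mu(SS^*+II^*+RR^*)$, and the first hypothesis makes the coefficient of $(n^*)^2$ in $L\big((n^*)^2\big)$ strictly negative (this is the bound \eqref{eq:line2boundPre}--\eqref{eq:line2bound}). Since $(n^*)^2\ge (S^*)^2+(I^*)^2+(R^*)^2$ for nonnegative populations, taking $c_2$ large supplies an arbitrarily strong negative multiple of every starred square, which is what lets the full-strength $\beta$ and $\sigma_\beta$ terms in the $I^*$ direction be absorbed; the threshold $\mathscr R_0^\sigma(\tfrac b\delta,0)<1$ then only needs to control the grouped term $(v+v^*)^2$, and the other grouped squares $(u+v+u^*+v^*)^2$, $(w+w^*)^2$ make the remaining transfers cancel. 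Without some such grouping of the noncompliant compartments (or an equivalent device), the weight-tuning you postpone cannot close, so as written the proposal does not prove the theorem.
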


\begin{remark} \label{rem:Thm42}
As alluded to above, the proof follows by establishing a stochastic Lyapunov function. Our function below was chosen to remain positive-definite while eliminating many of the nonlinear terms for easier calculation of $LV$, but other choices would likely work. Before embarking on the proof, we draw attention to the formal similarity between theorem \ref{eq:stabilityOfDFE}(i) and theorem \ref{thm:E0expMSS}. For the fully compliant disease free state to be stable, we need two conditions: one which guarantees that noncompliance does not spread too fast, and one which guarantees that the disease does not spread too fast. In this way, the theorems say the exact same thing regarding sufficient conditions for stability, but the conditions for theorem \ref{thm:E0expMSS} are strengthened to account for the random perturbation. Specifically, $\mathscr R_0(\tfrac b \delta,0)$ as defined in \eqref{eq:bestWorst} is strictly smaller than $\mathscr R^\sigma_0(\tfrac b \delta,0)$ as defined in \eqref{eq:stochR01} so that the condition $\mathscr R^\sigma_0(\tfrac b \delta,0)<1$ of theorem \ref{thm:E0expMSS} is strictly stronger than the condition $\mathscr R_0 (\tfrac b \delta,0)<1$ of theorem \ref{eq:stabilityOfDFE}(i). Similarly the assumption $\frac{b}{\delta} + \frac{\sigma^2_\mu}{2\mu}\left(\frac b \delta \right)^2 < \frac{\nu+\delta}{\mu}$ of theorem \ref{thm:E0expMSS} is strictly stronger than the assumption $\tfrac b \delta < \tfrac {\nu+\delta}\mu$ of theorem \ref{eq:stabilityOfDFE}(i). One final note is that if we set $\sigma_\beta = \sigma_\mu = 0$ in the stability conditions of theorem \ref{thm:E0expMSS}, we directly recover the stability conditions of theorem \ref{eq:stabilityOfDFE}(i), so theorem \ref{thm:E0expMSS} serves as a direct generalization of theorem \ref{eq:stabilityOfDFE} to the stochastic setting, with ``local asymptotic stability" replaced by ``exponential mean-square stability." In particular, we get a \emph{stronger} version of the sufficient condition for stability in theorem \ref{eq:stabilityOfDFE}(i), since this result is global rather than local. \end{remark}

\begin{proof}
    Suppose $(S,I,R,S^*,I^*,R^*)$ is a solution of \eqref{eq:StochSIR}. 
    
    We define new variables $u=S-\frac{b}{\delta}, v=I, w=R, u^*=S^*, v^*=I^*, w^*=R^*$. Thus, $v^{(M)} = (1-\alpha)I+I^*= (1-\alpha)v + v^*$ and $n^*=u^*+v^*+w^*$. Note that by nonnegativity guaranteed by theorem \ref{t.w09271} as well as the total population bound of $b/\delta$, we have that $u \le 0$, $v,w,u^*,v^*,w^* \ge 0$. Further the sum of any subcollection of the populations will be bounded by $b/\delta$ (e.g. $\left(u+\frac b\delta\right)  + v + w \le \frac b \delta$ and $u^* + v^* + w^* \le \frac b \delta$). In what follows, we use this total population bound several times without stopping to mention it.  

    The new variables $x = (u,v,w,u^*,v^*,w^*)$ satisfy 
    \begin{equation}  \label{eq:StochSIRNewVars}
\begin{split}
	du &= (- \beta(1-\alpha)(u+\tfrac{b}{\delta})v^{(M)} -\mu (u+\tfrac{b}{\delta})n^* + \nu u^* -\delta u)dt
    \\ &\hspace{3cm}+
    (-\sigma_\beta(1-\alpha)^2v(u+\tfrac{b}{\delta})) -\sigma_\mu (u+\tfrac{b}{\delta})u^*)dW\\
	dv &= (\beta(1-\alpha)(u+\tfrac{b}{\delta})v^{(M)} - \gamma v -\mu v n^* + \nu v^* - \delta v)dt 
    \\ &\hspace{3cm}+
    (\sigma_\beta(1-\alpha)^2v(u+\tfrac{b}{\delta}) -\sigma_\mu v v^*)dW, \\
	dw &= (\gamma v - \mu w n^* + \nu w^*- \delta w)dt - \sigma_\mu w w^* dW, \\
    du^* &= ( - \beta u^*v^{(M)} + \mu (u+\tfrac{b}{\delta}) n^* - \nu u^* -\delta u^*)dt +(-\sigma_\beta u^* v^* + \sigma_\mu (u+\tfrac{b}{\delta}) u^*)dW,\\
	dv^* &= (\beta u^* v^{(M)} - \gamma v^*+ \mu v n^* - \nu v^* -\delta v^*)dt +(\sigma_\beta u^* v^* + \sigma_\mu v v^*)dW,\\
	dw^* &= (\gamma v^* + \mu w n^* - \nu w^*- \delta w^*)dt +\sigma_\mu w w^* dW , 
	\end{split}
 \end{equation}
 We want to prove that the trivial solution of \eqref{eq:StochSIRNewVars} is exponentially mean-square stable. To this end, let $L$ be the generating operator of the system \eqref{eq:StochSIRNewVars}. We define the stochastic Lyapunov function \begin{equation} \label{eq:stochLyapunovFunc} \begin{split} V(x) &= (u+v+u^*+v^*)^2 + c_2(n^*)^2 +c_3(v+v^*)^2 + (v^*)^2 + (w+w^*)^2+(w^*)^2 \end{split}\end{equation} where $c_2,c_3$ are positive constants to be chosen later. Recalling that $n^* = u^*+v^*+w^*$, it is clear that $V$ is a positive definite quadratic form, from which the bounds \begin{equation} \label{eq:squareBound}A\abs{x}^2 \le V(x) \le B\abs{x}^2 \end{equation}follow easily (here $A$ and $B$ are positive constants defined in terms of $c_2,c_3$). Our result now follows from lemma \ref{lem:expMSS} if we can achieve a bound of the form $LV(x) \le -CV(x)$. For this bound, by \eqref{eq:squareBound}, it is sufficient to prove that \begin{equation}
     \label{eq:LVbound} LV(x) \le - C\abs{x}^2 = -C(u^2+v^2+w^2+(u^*)^2 + (v^*)^2 + (w^*)^2),
 \end{equation} for a positive constant $C$. We compute \begin{equation}\label{eq:bigUglyEquation1} \begin{split}
LV(x) &= -2\delta(u+v+u^*+v^*)^2 - 2\gamma(v+v^*)(u+v+u^*+v^*) \\ 
&\hspace{0.5cm}+2c_2\Big(\mu\Big(u+\tfrac b \delta + v + w\Big) - (\nu + \delta)\Big)(n^*)^2 + c_2 \sigma_\mu^2\Big(\Big(u+\tfrac b \delta\Big)u^* + vv^*+ww^*\Big)^2\\
&\hspace{0.5cm}+2c_3(v+v^*)\Big(\beta\Big((1-\alpha)\Big(u+\tfrac b \delta\Big) + u^*\Big)((1-\alpha)v+v^*)  - (\gamma + \delta)(v+v^*)\Big) \\ 
&\hspace{0.5cm}\hspace{2in}+c_3\sigma_\beta^2\left((1-\alpha)^2\left(u+\tfrac b \delta\right)v + u^*v^*\right)^2\\
&\hspace{0.5cm}+ 2 v^*\Big(\beta u^*((1-\alpha)v + v^*) + \mu v n^* - (\gamma+\nu+\delta)v^*\Big) + c_4(\sigma_\beta u^*v^* +\sigma_\mu vv^*)^2\\
&\hspace{0.5cm}+2(w+w^*)\Big( \gamma(v+v^*) - \delta (w+w^*)\Big) \\
&\hspace{0.5cm}+2 w^*\Big(\gamma v^* - \delta w^* +\mu wn^* - \nu w^* \Big) + \sigma_\mu^2 w^2(w^*)^2\\
&\defeq \ell_1 + \ell_2 + \ell_3 + \ell_4 + \ell_5 + \ell_6,
 \end{split}
 \end{equation} where $\ell_i$ is the $i^{\text{th}}$ line in the long equation of \eqref{eq:bigUglyEquation1} (considering the indented line as belonging to the line which precedes it). We look for quadratic bounds on each $\ell_i$ keeping in mind that $u\le 0$ and all other variables are nonnegative. First we have \begin{equation} \label{eq:line1bound} \begin{split}
     \ell_1 &\le -2\delta(u^2+v^2+(u^*)^2+(v^*)^2) - 4\delta u u^* -2\gamma(v^2 + (v^*)^2) - 2(\gamma+2\delta) u(v+v^*)\\
     &\le -2\delta(u^2+v^2+(u^*)^2+(v^*)^2) + \tfrac{\delta}2u^2 + 8\delta (u^*)^2 + \tfrac{\delta}2u^2 + \tfrac{2(\gamma+2\delta)^2}{\delta}v^2 \\ &\hspace{3in}+ \tfrac{\delta}2u^2 + \tfrac{2(\gamma+2\delta)^2}{\delta}(v^*)^2\\
     &\le -\tfrac \delta 2 u^2 -\Big(2\delta - \tfrac{2(\gamma+2\delta)^2}{\delta}\Big) v^2 + 6\delta (u^*)^2 -\Big(2\delta - \tfrac{2(\gamma+2\delta)^2}{\delta}\Big) (v^*)^2.
     \end{split}
 \end{equation} For $\ell_2$, we have \begin{equation}\label{eq:line2boundPre}
 \begin{split}
    \ell_2 &\le 2c_2\Big(\mu\Big(\tfrac { b}\delta \Big)- (\nu + \delta)\Big)(n^*)^2 +c_2 \sigma_\mu^2\Big(\tfrac b \delta \Big)^2(n^*)^2\\
    &= 2c_2\mu\Big(\tfrac { b}\delta  + \tfrac{\sigma_\mu^2}{2\mu} \Big(\tfrac b \delta \Big)^2 -\tfrac{\nu+\delta}\mu\Big)(n^*)^2.
 \end{split} 
 \end{equation} With the assumption $\frac{b}{\delta} + \frac{\sigma^2_\mu}{2\mu}\left(\frac b \delta \right)^2 < \frac{\nu+\delta}{\mu}$, the coefficient above is negative, so we can write \begin{equation} \label{eq:line2bound}
     \ell_2 \le -2\tilde c_2 (n^*)^2 \le -2\tilde c_2((u^*)^2 + (v^*)^2 + (w^*)^2)
  \end{equation} 
 for a positive constant $\tilde c_2$ which is proportional to the original constant $c_2.$ Because $\ell_2$ provides a negative coefficient in front of $(n^*)^2$ which can be made arbitrarily large, we can be entirely cavalier with most terms involving $u^*,v^*,w^*$ moving forward; the only terms which need to be handled carefully are those which include $v$.
 
 Continuing to $\ell_3$, the difficult question is what to do with the triple term $$\tilde \ell_3 = (v+v^*)\Big((1-\alpha)\Big(u+\tfrac b \delta\Big) + u^*\Big)((1-\alpha)v+v^*).$$ We first write $(1-\alpha)(u+\tfrac b \delta) + u^* = (1-\alpha)(u+\tfrac b\delta  +u^*) + \alpha u^* \le (1-\alpha)\tfrac b \delta + \alpha u^*$ to see \begin{equation}\label{eq:line3boundPre} \begin{split}
 \tilde \ell_3 &\le \tfrac b \delta (1-\alpha)(v+v^*)((1-\alpha)v+v^*) + \alpha u^* (v+v^*)((1-\alpha)v+v^*)\\
 &\le \tfrac{b}{\delta}(1-\alpha)((1-\alpha)v^2 + (2-\alpha)vv^* + (v^*)^2) + \tfrac {\alpha b} \delta ((1-\alpha)u^*v+u^*v^*).
 \end{split} 
 \end{equation} Fixing $\eps>0$, we see $(2-\alpha)vv^* \le \tfrac \eps 2v^2 + \frac{(2-\alpha)^2}{2\eps}(v^*)^2$ and $\alpha(1-\alpha)u^*v \le \tfrac \eps 2v^2 + \frac{\alpha^2(1-\alpha)^2}{2\eps}(u^*)^2.$ Plugging these into \eqref{eq:line3boundPre} (and bounding $u^*v^*$ similarly), we arrive at \begin{equation} \label{eq:line3boundPre2}
    \tilde \ell_3 \le \tfrac b \delta ((1-\alpha)^2 + \eps)v^2 + \tfrac C \eps (u^*)^2 + \tfrac C \eps (v^*)^2
 \end{equation} for a positive constant $C$ comprised of the ambient parameters (in what follows, the positive constant $C$ will change from line to line). With this in hand, we see 
 \begin{equation} \label{eq:line3boundPre3}
     \begin{split}
    \ell_3 &\le 2c_3\bigg(\tfrac b \delta \beta ((1-\alpha)^2 + \eps)v^2 + \tfrac C \eps (u^*)^2 + \tfrac C \eps (v^*)^2\bigg)  - 2c_3 (\gamma+\delta)(v+v^*)^2 \\ &\hspace{6.5cm} + c_3\sigma_\beta^2\left((1-\alpha)^2\left(u+\tfrac b \delta\right)v + u^*v^*\right)^2     \\
    &\le 2c_3\bigg(\tfrac b \delta\beta ((1-\alpha)^2 + \eps)v^2 + \tfrac C \eps (u^*)^2 + \tfrac C \eps (v^*)^2\bigg)-2c_3(\gamma+\delta)(v^2 + (v^*)^2) \\ &\hspace{6.5cm} + c_3\sigma_\beta^2\Big(\tfrac b  \delta\Big)^2 \left((1-\alpha)^2 v + v^*\right)^2. 
     \end{split}
 \end{equation} Finally, we use the same type of bound on $((1-\alpha)^2v^2 + v^*)^2$: after multiplying it out, we bound the cross term $2(1-\alpha)^2vv^* \le \eps v^2 + \tfrac C\eps (v^*)^2.$ Putting all this together, we arrive at 
\begin{equation} \label{eq:line3bound} \begin{split}
    \ell_3 &\le 2c_3\bigg(\tfrac b \delta \beta ((1-\alpha)^2+\eps) + \tfrac{\sigma^2_\beta}2\Big(\tfrac b \delta\Big)^2((1-\alpha)^4+\eps) - (\gamma+\delta)\bigg)v^2 + \tfrac C \eps (u^*)^2 + \tfrac C \eps (v^*)^2.
    \end{split}
\end{equation}  

Next, the bound for $\ell_4$ is quite simple: \begin{equation}\label{eq:line4bound} \begin{split}
\ell_4 &= 2 v^*\Big(\beta u^*((1-\alpha)v + v^*) + \mu v n^* - (\gamma+\nu+\delta)v^*\Big) + (\sigma_\beta u^*v^* +\sigma_\mu vv^*)^2\\
&\le \frac{2\beta b}{\delta} u^*v^* + \tfrac{2\mu b}{\delta}v^*n^* - 2(\gamma+\nu+\delta)(v^*)^2 + \Big(\tfrac b \delta \Big)^2 (\sigma_\beta + \sigma_\mu)^2 (v^*)^2\\
&\le C((u^*)^2 + (v^*)^2 + (w^*)^2).
\end{split}
\end{equation} For $\ell_5$, we see \begin{equation} \label{eq:line5bound}
\begin{split}
\ell_5 &= 2\gamma(w+w^*)(v+v^*) - 2\delta(w+w^*)^2\\
&\le 2\gamma vw + 2\gamma v^*w + 2\gamma vw^* + 2\gamma v^*w^*  -2\delta(w^2 + (w^*)^2)\\
&\le \tfrac{2\gamma^2}{\delta}v^2 +\tfrac{\delta}{2}w^2 + \tfrac{2\gamma^2}{\delta}(v^*)^2 +\tfrac{\delta}{2}w^2  - 2\delta w^2 + C((v^*)^2 + (w^*)^2) \\
&\le \tfrac{2\gamma^2}{\delta}v^2 - \delta w^2 + C((v^*)^2 + (w^*)^2).
\end{split}
\end{equation} And finally, for $\ell_6$, we simply use $w \le b/\delta$ to arrive at \begin{equation} \label{eq:line6bound} \begin{split}
\ell_6 &\le 2(\gamma v^*w^* - (\delta+\nu)(w^*)^2 + \mu \Big(\tfrac b\delta\Big)w^*n^*) + \sigma_\mu^2 \Big(\tfrac b \delta \Big)^2 (w^*)^2 \\ &\le C((u^*)^2 + (v^*)^2 + (w^*)^2). \end{split}
\end{equation} Adding all the bounds \eqref{eq:line1bound}, \eqref{eq:line2bound}, \eqref{eq:line3bound}, \eqref{eq:line4bound}, \eqref{eq:line5bound}, \eqref{eq:line6bound} together, we see that \begin{equation} \label{eq:FinalBound} \begin{split} 
LV(x)&\le -\tfrac \delta 2 u^2 - \delta w^2 + \left(C\Big(1+\tfrac 1 \eps\Big) - 2\tilde c_2\right)((u^*)^2 + (v^*)^2 + (w^*)^2) \\
&\hspace{1cm} +{\scriptstyle\left[2c_3\bigg(\tfrac b \delta \beta((1-\alpha)^2+\eps) + \tfrac{\sigma^2_\beta}2\Big(\tfrac b \delta\Big)^2((1-\alpha)^4+\eps) - (\gamma+\delta)\bigg) +\tfrac{2\gamma^2 +2(\gamma+2\delta)^2}{\delta} - 2\delta\right]}v^2 \\
&= -\tfrac \delta 2 u^2 - \delta w^2 + \left(C\Big(1+\tfrac 1 \eps\Big) - 2\tilde c_2\right)((u^*)^2 + (v^*)^2 + (w^*)^2) \\
&\hspace{1cm}+ \left[2c_3(\gamma+\delta)\bigg(\mathscr R^\sigma_0(\tfrac b\delta,0) + \bigg(\tfrac b \delta + \tfrac{\sigma^2_\beta}2\Big(\tfrac b \delta\Big)^2\bigg)\tfrac{\eps}{\gamma+\delta} - 1\bigg) +\tfrac{2\gamma^2 +2(\gamma+2\delta)^2}{\delta} - 2\delta  \right]v^2.
\end{split}
\end{equation} Assuming $\mathscr R^\sigma_0(\tfrac b\delta,0) < 1$, we can choose $\eps$ small enough that $\mathscr R^\sigma_0(\tfrac b\delta,0) + \bigg(\tfrac b \delta + \tfrac{\sigma^2_\beta}2\Big(\tfrac b \delta\Big)^2\bigg)\tfrac{\eps}{\gamma+\delta} < 1$. Having done so, we define $$c_3 = \frac{\gamma^2 + (\gamma+2\delta)^2}{\delta(\gamma+\delta)\left(1 - \mathscr R^\sigma_0(\tfrac b\delta,0) - \left(\tfrac b \delta + \tfrac{\sigma^2_\beta}2\Big(\tfrac b \delta\Big)^2\right)\tfrac{\eps}{\gamma+\delta}\right)} > 0.$$ This choice of $c_3$ causes the first two terms in the brackets of \eqref{eq:FinalBound} to cancel, leaving \begin{equation}
    \label{eq:FinalBound2}
    LV(x) \le -\tfrac \delta 2 u^2  -2\delta v^2- \delta w^2 + \left(C\Big(1+\tfrac 1 \eps\Big) - 2\tilde c_2\right)((u^*)^2 + (v^*)^2 + (w^*)^2).
\end{equation} Finally, with $\eps$ fixed, we can take $\tilde c_2$ large enough that $C\Big(1+\tfrac 1 \eps\Big) - 2\tilde c_2<0$ whereupon we achieve \begin{equation}
     \label{eq:LVboundAgain} LV(x) \le  -C(u^2+v^2+w^2+(u^*)^2 + (v^*)^2 + (w^*)^2),
 \end{equation} and applying lemma \ref{lem:expMSS} shows that the DFE $X_1 = (\tfrac b \delta, 0,0,0,0,0)$ is exponentially mean-square stable. 
\end{proof}

Next we turn our attention to worst case scenario of $\xi = 1$ and $\nu= 0.$ In this case, the DFE is $X_2 = (0,0,0,\tfrac b \delta,0,0)$ and we define a disease extinction threshold by \begin{equation}
    \label{eq:stochR0WorstCase}
    \mathscr R^\sigma_0(0,\tfrac b \delta) = \frac{\beta\Big(\tfrac b \delta\Big) + \frac{\sigma_\beta^2}{2}\Big(\tfrac b \delta \Big)^2}{\gamma +\delta}
\end{equation}

\begin{theorem} \label{thm:WorstCase}
    Suppose $\xi = 1$ and $\nu = 0.$ If $\mathscr R^\sigma_0(0,\tfrac b \delta) < 1,$ then for any solution $(S,I,R,S^*,I^*,R^*)$ of \eqref{eq:StochSIR}, we have \begin{equation} \label{eq:expDecayIIs}\mathbb E(I^2), \mathbb E((I^*)^2) \le 2 \max\{I_0^2,(I_0^*)^2\} e^{-C t}, \,\,\,\,\,\,\,\, \text{ where } \,\, C = \frac{2\Big(1-\mathscr R^\sigma_0(0,\tfrac b \delta)\Big)}{(\gamma+\delta)} > 0. \end{equation} If in addition $\frac{\sigma_\mu^2}2\Big(\tfrac  b\delta\Big)^2 < \delta$, then $X_2$ exponentially mean-square stable. 
\end{theorem}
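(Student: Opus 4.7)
The theorem contains two separate assertions: a direct exponential bound on the second moments of $I$ and $I^*$, and, under the additional noise-bound hypothesis, exponential mean-square stability of $X_2$. I would treat them in turn.

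For the first assertion, the plan is to work with the single quantity $Y = I + I^*$. The key observation is that, when $\xi = 1$ and $\nu = 0$, the transfer pieces between $I$ and $I^*$ cancel in $dY$: the deterministic $\pm \mu I N^*$ terms cancel, as do the stochastic $\mp \sigma_\mu I I^*$ terms. What remains is
\[
dY = \bigl[\beta\bigl((1-\alpha)S + S^*\bigr)\bigl((1-\alpha)I + I^*\bigr) - (\gamma+\delta) Y\bigr] dt + \sigma_\beta\bigl[(1-\alpha)^2 SI + S^* I^*\bigr] dW.
\]
Using nonnegativity from theorem~\ref{t.w09271} together with the total-mass bound $S + S^* \le b/\delta$, $(1-\alpha)I + I^* \le Y$, and $(1-\alpha)^2 SI + S^* I^* \le (b/\delta) Y$, It\^o's formula applied to $V_1 = Y^2$ yields
\[
L V_1 \le \Bigl[2\beta (b/\delta) + \sigma_\beta^2 (b/\delta)^2 - 2(\gamma+\delta)\Bigr] V_1 = -2(\gamma + \delta)\bigl(1 - \mathscr R_0^\sigma(0, b/\delta)\bigr) V_1.
\]
Taking expectations (the stochastic integral is a true martingale since $Y$ is bounded) and applying Gr\"onwall, $\mathbb E[Y^2(t)] \le Y(0)^2 e^{-Ct}$; the claimed bound on $\mathbb E(I^2)$ and $\mathbb E((I^*)^2)$ follows from $Y^2 \ge \max\{I^2,(I^*)^2\}$ pointwise and the bound $Y(0)^2 \le 2\bigl(I_0^2 + (I_0^*)^2\bigr)$.

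For the second assertion, I would mirror the Lyapunov strategy used in theorem~\ref{thm:E0expMSS}. First change variables by setting $u = S$, $v = I$, $w = R$, $u^* = S^* - b/\delta$, $v^* = I^*$, $w^* = R^*$, so that $X_2$ corresponds to the origin and $u^* \le 0$, with all other new variables nonnegative. Then I would propose a positive-definite quadratic Lyapunov function
\[
V(x) = c_1(v+v^*)^2 + c_2 u^2 + c_3 (u^*)^2 + c_4 w^2 + c_5 (v^*)^2 + c_6 (w^*)^2,
\]
with constants $c_i > 0$ to be chosen. The $(v+v^*)^2$ term inherits the negative contribution from the first part, controlling both $v^2$ and $(v^*)^2$. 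For $u^2$, the drift of $S$ contributes $-2\delta u^2$ (plus additional nonpositive interaction terms), while the quadratic variation $\bigl(\sigma_\beta(1-\alpha)^2 SI + \sigma_\mu SS^*\bigr)^2$ is split by Young's inequality with a small parameter $\eta > 0$ into $(1+\eta)\sigma_\mu^2 (b/\delta)^2 u^2 + (1+\eta^{-1})\sigma_\beta^2 (1-\alpha)^4 (b/\delta)^2 v^2$. The strict hypothesis $\tfrac{\sigma_\mu^2}{2}(b/\delta)^2 < \delta$ then ensures that for sufficiently small $\eta$ one has $-2\delta + (1+\eta)\sigma_\mu^2 (b/\delta)^2 < 0$, giving a strictly negative coefficient on $u^2$. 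The spilled $v^2$ contribution is absorbed by enlarging $c_1$. The variables $u^*$, $w$, $w^*$ each acquire a $-2\delta$ drift coefficient that dominates their remaining cross-interaction pieces after routine AM-GM estimates, with any leftover $v^2$, $(v^*)^2$ terms absorbed by enlarging $c_1$ and $c_5$.

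Assembling these bounds yields $LV(x) \le -C|x|^2$ for some $C > 0$, and combined with the two-sided estimate $A|x|^2 \le V(x) \le B|x|^2$ (immediate from positive definiteness of the quadratic form), lemma~\ref{lem:expMSS} concludes exponential mean-square stability of $X_2$. The main obstacle, as in theorem~\ref{thm:E0expMSS}, is the bookkeeping needed to balance the $c_i$ so that cross-interaction and stochastic variation terms do not overwhelm the negative drift contributions; the specific new difficulty here is that the $\sigma_\mu SS^*$ noise in $dS$ is of order $\sigma_\mu(b/\delta) S$ near the DFE, and it is precisely this term that forces the auxiliary hypothesis $\tfrac{\sigma_\mu^2}{2}(b/\delta)^2 < \delta$.
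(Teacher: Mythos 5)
Your argument for the first assertion is essentially the paper's own proof: the paper also works with $V=(I+I^*)^2$, uses the cancellation of the $\mu$- and $\nu$-transfer terms, the bounds $(1-\alpha)S+S^*\le b/\delta$ and $(1-\alpha)^2SI+S^*I^*\le (b/\delta)(I+I^*)$, kills the stochastic integral in expectation, and closes with Gr\"onwall. (Incidentally, the rate you derive, $2(\gamma+\delta)\bigl(1-\mathscr R_0^\sigma(0,\tfrac b\delta)\bigr)$, is what the computation actually gives; the factor $\tfrac{2(1-\mathscr R_0^\sigma)}{\gamma+\delta}$ appearing in the statement looks like a typo in the paper, and both you and the paper are slightly loose in passing from $Y_0^2$ to $2\max\{I_0^2,(I_0^*)^2\}$, which really costs a factor $4$.) For the second assertion you follow the same overall strategy as the paper (shift variables so $X_2$ is the origin, build a positive-definite quadratic stochastic Lyapunov function, invoke lemma~\ref{lem:expMSS}), but with a genuinely different quadratic form: the paper mirrors the function of theorem~\ref{thm:E0expMSS}, namely $(u+v+u^*+v^*)^2+c_2(u+v+w)^2+c_3(v+v^*)^2+v^2+(w+w^*)^2+w^2$, whose key feature is that the aggregate compliant population $u+v+w$ satisfies a self-contained negative bound \eqref{eq:uvwBound} under $\tfrac{\sigma_\mu^2}{2}(b/\delta)^2<\delta$, so one tunable coefficient $c_2$ controls all $u,v,w$ terms at once; your diagonal form $c_1(v+v^*)^2+c_2u^2+c_3(u^*)^2+c_4w^2+c_5(v^*)^2+c_6(w^*)^2$ instead handles each compartment separately via Young/AM--GM on the cross terms. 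This works, but note that the noise hypothesis is needed not only for the $u^2$ quadratic variation (as you emphasize) but also to beat $\sigma_\mu^2R^2(R^*)^2\le\sigma_\mu^2(b/\delta)^2w^2$ (resp.\ $(w^*)^2$) in the $w$ and $w^*$ blocks, and you must also absorb the positive terms $2|u^*|\beta S^*I^{(M)}$ and $2\mu RN^*R^*$ coming from the $(u^*)^2$ and $(w^*)^2$ blocks; the bookkeeping still closes because the spills only flow toward $u^2$, $w^2$, $v^2$ and $(v^*)^2$, so one can fix $c_3,c_6$ first, then $c_4$ and $c_2$, and finally enlarge $c_1$ (whose generator bound has no positive spill) to absorb everything. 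The trade-off is that the paper's aggregate-variable choice yields a shorter computation with the hypothesis entering in one clean place, while your choice is more mechanical and arguably easier to verify term by term.
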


\begin{remark} \label{rem:Thm44}
    We split theorem \ref{thm:WorstCase} into two statements to reflect what might actually be important to a policy-maker. Here $X_2$ could be unstable in two different ways: (1) infections could increase and persist or (2) the stochasticity in transmission of noncompliance could cause random spikes in compliance to occur. For the policy-maker, scenario (2) could be seen as a benefit, so there is no reason to actually hope that $X_2$ is stable with respect to this sort of perturbation. However, one \emph{would} want to guarantee that scenario (1) does not occur. The first statement, regarding decay of second moments of $I$ and $I^*$, gives conditions under which the disease is expected to die out. The second statement regarding mean-square stability is included for mathematical completeness, though its unclear if this is even desirable in a real-world scenario. The condition $\frac{\sigma_\mu^2}2\Big(\tfrac b \delta\Big)^2 < \delta$ in the second statement guarantees that the compliant population does not randomly spike. As seen in \eqref{eq:uvwBound} below, this could be relaxed to $\frac{\sigma_\mu^2}2\Big(\tfrac b \delta\Big)^2 < \delta + \eta \mu $ where $\eta$ is any lower bound on the total noncompliant population. Regardless, setting $\sigma_\beta = \sigma_\mu = 0$, the latter conditions reduces to $\delta >0,$ and $\mathscr R^\sigma_0(0,\tfrac b \delta)$ reduces to $\mathscr R_0(0,\tfrac b\delta)$ as defined in \eqref{eq:bestWorst}, and thus theorem \ref{thm:WorstCase} reduces to the sufficient stability condition of theorem \ref{eq:stabilityOfDFE}(iii) with $\xi= 1$ and $\nu = 0$.  
\end{remark}

\begin{proof}
    To prove exponential decay of the second moments of $I$ and $I^*$ under the assumption that $\mathscr R_0^\sigma(0,\tfrac b\delta)<1$, we define $V = (I+I^*)^2$. Since $I,I^* \ge 0$, it is clear that  \begin{equation} \label{eq:Ubound0}I^2 + (I^*)^2 \le V \le 2(I^2 + (I^*)^2)\le 2\max\{I^2,(I^*)^2\}.\end{equation}  Then \begin{equation} \label{eq:Ubound1} \begin{split} 
    dV &= 2(I+I^*)\bigg(\beta((1-\alpha)S+S^*)((1-\alpha)I+I^*) - (\gamma+\delta)(I+I^*)  \\
    &\hspace{2cm}+\sigma_\beta^2((1-\alpha)^2SI + S^*I^*)^2\bigg)dt + 2\sigma_\beta(I+I^*)((1-\alpha)^2SI + S^*I^*)dW. 
    \end{split} 
    \end{equation} Letting $f=f(S,S^*,I,I^*)$ denote the stochastic term, and using $(1-\alpha)S+S^*\le \tfrac b \delta$ and $((1-\alpha)^2SI + S^*I^*) \le (S+S^*)(I+I^*)\le \tfrac b \delta (I+I^*)$, we have \begin{equation} \label{eq:boundVVV} \begin{split}
        dV &\le 2\bigg(\beta \Big(\tfrac{b}{\delta}\Big)+ \tfrac{\sigma_\beta^2}{2}\Big(\tfrac b \delta \Big)^2 - (\gamma+\delta)\bigg)(I+I^*)^2 + f\, dW \\ 
        &\le \frac{2}{\gamma+\delta}\Big(\mathscr R_0^\sigma(0,\tfrac b \delta) -1\Big)(I+I^*)^2dt + f\, dW\\
        &= -C(I+I^*)^2dt + f\, dW,
        \end{split}        
    \end{equation} with $C$ as defined in \eqref{eq:expDecayIIs}, which is negative due to the assumption that $\mathscr R_0^\sigma(0,\tfrac b \delta) <1$. The function $f$ is bounded, since its arguments remain bounded, and thus $\mathbb E\Big(\int^t_0 f dW\Big) = 0$, so integrating \eqref{eq:boundVVV}, taking the expectation and applying Fubini's theorem, we arrive at $$\mathbb E(V) \le V_0 - C \int^t_0 \mathbb E(V) dt.$$ Gronwall's inequality yields $$V \le V_0 e^{-Ct}$$ and we arrive at the desired exponential decay using \eqref{eq:Ubound0}.

    If in addition we assume that $\frac{\sigma_\mu^2}{2}\Big(\tfrac  b\delta\Big)^2 < \delta$, then we can prove $X_2 = (0,0,0,\tfrac b \delta,0,0)$ is exponentially mean-square stable by finding a suitable Lyapunov function in the same manner as in the proof of theorem \ref{thm:E0expMSS}. In this case, letting $u = S,v=I,w=R,u^* = S^*-\tfrac b \delta,v^*=I^*,w^*=R^*$, we define \begin{equation} \label{eq:stochLyapunovFunc2} V = (u+v+u^*+v^*)+c_2(u+v+w)^2+c_3(v+v^*)^2+v^2+(w+w^*)^2+w^2.\end{equation} Note, this is essentially the same Lyapunov function as in \eqref{eq:stochLyapunovFunc}, except all the asterisks have been flipped. It is still positive definite. Because the computation is so similar to that in the proof of theorem \ref{thm:E0expMSS}, we omit the full details but describe the strategy of the bounds. A key realization in this case (which is akin the the bound on $\ell_2$ in the proof of theorem \ref{thm:E0expMSS}), is that \begin{equation} \label{eq:uvwBound} d(u+v+w)^2 \le 2\bigg(\tfrac{\sigma_\mu^2}{2}\Big(\tfrac b \delta \Big)^2-\Big(\mu\Big(\big(u^*+\tfrac b \delta\big)+v^*+w^*\Big) + \delta\Big)\bigg)(u+v+w)^2 + (\cdots) dW. \end{equation} Since $\mu\Big(\big(u^*+\tfrac b \delta\big)+v^*+w^*\Big) \ge 0$ and $\tfrac{\sigma_\mu^2}{2}\Big(\tfrac b \delta \Big)^2<\delta$, the coefficient of $(u+v+w)^2$ is negative. Since this appears in \eqref{eq:stochLyapunovFunc2} with an arbitrary constant $c_2$, when computing $LV$, one has full control over any terms involving $u,v,w$, except those that also involve $v^*$. In what remains, the only problematic terms are cross terms like $vv^*$, but these are handled as above, except with the roles of $v$ and $v^*$ reversed so that $v$ absorbs any large constants: $vv^* \le \eps (v^{*})^2 + \frac C \eps v^2$. Doing this gives a bound akin to that in \eqref{eq:FinalBound}. In this case, we arrive at  \begin{align*}LV &= -C_1((u^*)^2+(w^*)^2) - \Big(\tilde c_2 - C_2\Big(1+\tfrac 1 \eps\Big)\Big)(u^2+v^2+w^2) \\
    &\hspace{1cm}+ \bigg(2c_3(\gamma+\delta)\Big(\mathscr R_0^\sigma(0,\tfrac b \delta) + C_3 \eps - 1\Big) + C_4 - 2\delta\bigg)(v^*)^2
    \end{align*} for some positive constants $C_1,C_2,C_3,C_4.$ Taking $\eps$ small enough, the coefficient in front of $c_3$ is negative, so we choose $c_3$ so as to cancel $C_4$, and then choose $\tilde c_2$ large enough that $\tilde c_2 - C_2 \Big(1+\tfrac 1 \eps\Big)>0$, and apply lemma \ref{lem:expMSS}, to show that $X_2 = (0,0,0,\tfrac b \delta,0,0)$ is exponentially mean-square stable. 
\end{proof} 

Finally, we discuss the mixed disease free states $X_{s,s^*} = (s,0,0,s^*,0,0)$ where $s,s^*$ are the respective nonzero elements of $x_3$ as defined in \eqref{eq:DFE3} which we recall here: \begin{equation}  \label{eq:DFS}\begin{split}
    s &= \tfrac 1 2\left( \tfrac{b}{\delta} + \tfrac{\delta + \nu}{\mu} - \sqrt{\left(\tfrac{b}{\delta} - \tfrac{\delta + \nu}{\mu} \right)^2 + \tfrac{4\xi b}{\mu} } \right), \\
    s^*&= \tfrac 1 2\left( \tfrac{b}{\delta} - \tfrac{\delta + \nu}{\mu} + \sqrt{\left(\tfrac{b}{\delta} - \tfrac{\delta + \nu}{\mu} \right)^2 + \tfrac{4\xi b}{\mu} } \right). \end{split}
\end{equation} We prove our result for any $\xi \in [0,1]$. As mentioned in proposition \ref{prop:DFEs}, if $\xi \to 0$, the duo $(s,s^*)$ alternately converges to $(\tfrac b \delta,0)$ if $\tfrac b \delta \le \tfrac{\delta+\nu}{\mu}$ or $(\tfrac{\delta + \nu}\mu,\tfrac b \delta - \tfrac{\delta+\nu}{\mu})$ if $\tfrac b \delta > \tfrac{\delta+\nu}{\mu}$. Because we are interested in the case of mixed disease free states (where some of the population is compliant and some is noncompliant), we assume that the latter condition holds.  We emphasize again that, while $X_{s,s^*}$ is an equilibrium point for the deterministic system \eqref{eq:SIRwithCompliance}, it is \emph{not} an equilibrium point for the stochastic system \eqref{eq:StochSIR} due to the stochastic drift term. However, since the deterministic dynamics are steady at $X_{s,s^*}$, it makes sense to try to quantify just how far the solution can stray from $X_{s,s^*}$ in terms of the the stochastic parameters $\sigma_\beta, \sigma_\mu$. This is the gist of the ensuing theorem. Before stating the theorem, we recall a version of the strong law of large numbers for continuous martingales, which will be useful in the proof.

\begin{lemma} \cite[Ch. 1, Thm. 3.4]{MaoBook} \label{lem:SLLN}
Suppose that $M = M(t)$ is a scalar, real-valued, continuous martingale vanishing at $t = 0$. Then $$\limsup_{t\to\infty} \frac{M(t)^2}{t} < \infty \,\,\, \text{a.s.} \,\,\,\,\,\, \implies \,\,\,\,\,\, \limsup_{t\to\infty} \frac{M(t)}{t} = 0 \,\,\, \text{a.s.}$$ 
\end{lemma}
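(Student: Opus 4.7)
The plan is to observe that the stated implication is an elementary pathwise fact that does not genuinely invoke the martingale structure of $M$; only continuity enters. No serious obstacle arises in the proof itself, which is presumably why the result is cited rather than reproved in the excerpt.

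First I would restrict attention to the almost-sure event $\Omega_0 \subset \Omega$ on which the hypothesis holds, and on this event define the random constant $C(\omega) \defeq 1 + \limsup_{t\to\infty} M(t;\omega)^2/t$, which is finite and $\mathscr F$-measurable by continuity of $M$. By definition of the limit superior, for each $\omega \in \Omega_0$ there is a random time $T(\omega) > 0$ beyond which $M(t;\omega)^2 \le C(\omega)\,t$. Dividing this bound by $t^2$ and taking positive square roots yields $|M(t;\omega)|/t \le \sqrt{C(\omega)/t}$ for all $t \ge T(\omega)$. Letting $t \to \infty$ forces $M(t;\omega)/t \to 0$, which is strictly stronger than the stated $\limsup$ conclusion and holds for every $\omega \in \Omega_0$.

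The real subtlety lies downstream, in the anticipated application of the lemma rather than in the lemma itself: to invoke it one must first verify the hypothesis $\limsup_{t\to\infty} M(t)^2/t < \infty$ almost surely, and this is where the martingale structure becomes essential. In the present setting any candidate $M$ arises as a stochastic integral against the Brownian motion $W$ driving \eqref{eq:StochSIR}, and the natural pathway is to use Doob's maximal inequality or the Burkholder-Davis-Gundy inequality to bound $\E[\sup_{s \le t} M(s)^2]$ in terms of the expected quadratic variation, followed by a Borel-Cantelli argument along integer times to promote this $L^2$-type estimate to a pathwise control. Once the hypothesis is secured in this way, Lemma~\ref{lem:SLLN} converts it into the sharper pointwise statement $M(t)/t \to 0$ almost surely that would be needed to quantify how far the stochastic dynamics can drift from the mixed disease-free state $X_{s,s^*}$.
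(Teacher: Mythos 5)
Your pathwise argument does prove the implication exactly as displayed: once $M(t;\omega)^2\le C(\omega)\,t$ for all large $t$, dividing by $t^2$ gives $|M(t)|/t\le\sqrt{C(\omega)/t}\to 0$. The gap is that this literal reading is not the content of the cited result, nor the statement the paper actually uses. Mao's Theorem 3.4 (Ch.~1) is the strong law of large numbers for continuous (local) martingales, whose hypothesis is $\limsup_{t\to\infty}\langle M,M\rangle_t/t<\infty$ on the \emph{quadratic variation}; the paper's ``$M(t)^2$'' is shorthand for $\langle M,M\rangle_t$, as is visible immediately after the lemma, where the hypothesis is checked via $M(t)^2/t=\frac1t\int_0^t f^2\,dt\le C$ --- an identity valid for $\langle M,M\rangle_t$ but false for the square of the martingale itself. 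With the quadratic-variation reading the lemma is no longer a pathwise triviality: $\langle M,M\rangle_t\le C^2t$ does \emph{not} bound $M(t)^2$ by $Ct$ pathwise; indeed, by the Dambis--Dubins--Schwarz time change and the law of the iterated logarithm, $\limsup_{t\to\infty}M(t)^2/t=+\infty$ a.s.\ whenever $\langle M,M\rangle_t$ grows linearly, which is exactly the situation for the stochastic integrals $\int_0^t f\,dW$ with bounded, non-vanishing $f$ arising in the proof of theorem~\ref{thm:Xssbehavior}. So the hypothesis of the trivial version you proved typically fails in the intended application, and your lemma would be vacuous there; the genuine statement requires the martingale structure, e.g.\ via the exponential martingale inequality plus Borel--Cantelli, or by considering $N(t)=\int_0^t(1+s)^{-1}\,dM(s)$, noting $\langle N,N\rangle_\infty<\infty$, and combining martingale convergence with Kronecker's lemma.

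Relatedly, the downstream plan you sketch (Doob or BDG to get $\E\big[\sup_{s\le t}M(s)^2\big]\lesssim t$, then Borel--Cantelli along integer times) does not close as stated: Chebyshev gives $P\big(\sup_{s\le n}|M(s)|>n^{(1+\epsilon)/2}\big)\lesssim n^{-\epsilon}$, summable only for $\epsilon>1$, which yields $|M(t)|\lesssim t^{(1+\epsilon)/2}$ with exponent larger than $1$ and hence no control of $M(t)/t$. One genuinely needs one of the sharper martingale arguments above (or the discrete-time martingale SLLN for $M(n)$ together with Doob's inequality over unit blocks). Since the paper itself only cites Mao and never reproves the lemma, the fair summary is: your proof is a correct proof of a different statement --- one that is trivially true precisely because its hypothesis is too strong to be verified where the paper needs it.
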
 

In particular, by theorem \ref{t.w09271}, any solution $(S,I,R,S^*,I^*,R^*)$ of \eqref{eq:StochSIR} remains bounded and nonnegative (a.s.) and thus for any continuous function $f(S,I,R,S^*,I^*,R^*)$, $$M(t) = \int^t_0 f(S,I,R,S^*,I^*,R^*) dW$$ is a continuous martingale which vanishes at $t = 0$ and satisfies $$\frac{M(t)^2}t = \frac{\int^t_0 f^2 dt}{t} \le \frac{\int^t C \, dt}{t} = C$$ where $C$ is an upper bound on $f^2$ for $0\le S,I,R,S^*,I^*,R^* \le \tfrac b \delta$. Thus $M(t)$ satisfies the hypotheses of lemma \ref{lem:SLLN} and $\limsup_{t\to\infty} (M(t)/t) = 0$ almost surely.  Because of this, in the work below, we will not be especially careful when we collect stochastic terms, since at the end, we will integrate, divide by $t$ and take a limit, whereupon these terms vanish. With this note, we state and prove our final theorem. 

\begin{theorem} \label{thm:Xssbehavior}
     Define $s,s^*$ as in \eqref{eq:DFS} and $\mathscr R_0(s,s^*)$ as in \eqref{eq:R0deterministic}, and assume that $\tfrac b \delta > \tfrac {\delta+\nu}\mu$. If $\mathscr R_0(s,s^*)<1$, then there is a positive constant $C$ (depending on the ambient parameters but independent of $\sigma_\beta,\sigma_\mu$) such that any solution $(S,I,R,S^*,I^*,R^*)$ of \eqref{eq:StochSIR} satisfies \begin{equation}
        \limsup_{t\to\infty} \frac 1 t \left(\int^t_0 (S-s)^2+I^2+R^2 + (S^*-s^*)^2+(I^*)^2+(R^*)^2)dt\right) \le C (\sigma_\beta^2 + \sigma_\mu^2)
     \end{equation}
\end{theorem}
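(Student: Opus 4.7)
The plan is to adapt the stochastic Lyapunov function approach of Theorems \ref{thm:E0expMSS} and \ref{thm:WorstCase} to the fact that $X_{s,s^*}$ is \emph{not} an equilibrium of \eqref{eq:StochSIR}: the $\sigma_\beta$-noise vanishes at $X_{s,s^*}$ because $I = I^* = 0$, but the $\sigma_\mu$-noise leaves a persistent drift of magnitude $\sigma_\mu s s^* > 0$ in the $dS$ and $dS^*$ equations. We therefore cannot hope for exponential mean-square stability, and must instead aim for a pathwise time-averaged deviation bound controlled by $\sigma_\beta^2 + \sigma_\mu^2$. The first step is to shift the origin: let $u = S - s$, $v = I$, $w = R$, $u^* = S^* - s^*$, $v^* = I^*$, $w^* = R^*$, so $X_{s,s^*}$ corresponds to the origin in $(u, v, w, u^*, v^*, w^*)$-space. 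Theorem \ref{t.w09271} together with $s + s^* = b/\delta$ ensures that all shifted coordinates remain bounded in magnitude by $b/\delta$ almost surely.

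Next, I would build a positive-definite quadratic Lyapunov function $V$ of the shifted variables patterned on \eqref{eq:stochLyapunovFunc}: a leading $c_3(v+v^*)^2$, a weight $c_2(u^* + v^* + w^*)^2$ controlling the noncompliant subpopulation, and smaller controls on the remaining coordinates. Applying It\^o's formula yields $dV = LV \, dt + M \, dW$ with $M$ polynomial in the state, and the key pointwise bound to establish is
\begin{equation} \label{eq:LVplan}
LV \le -C_1\bigl(u^2 + v^2 + w^2 + (u^*)^2 + (v^*)^2 + (w^*)^2\bigr) + C_2(\sigma_\beta^2 + \sigma_\mu^2)
\end{equation}
for positive constants $C_1, C_2$ independent of $\sigma_\beta, \sigma_\mu$. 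The negative-definite piece comes from the deterministic drift $\nabla V \cdot F$ together with the assumption $\mathscr R_0(s,s^*) < 1$: in the analogue of the $\ell_3$ computation from the proof of Theorem \ref{thm:E0expMSS}, the coefficient of $(v + v^*)^2$ will reduce, after a Young's inequality splitting, to $2 c_3 (\gamma + \delta)\bigl(\mathscr R_0(s, s^*) - 1 + O(\eps)\bigr)$, which is negative for $\eps$ small enough. The additive constant $C_2(\sigma_\beta^2 + \sigma_\mu^2)$ comes from $\tfrac12 G \cdot D^2 V \cdot G$: since each entry of the diffusion $G$ is bounded by $C(\sigma_\beta + \sigma_\mu)(b/\delta)^2$ uniformly in the bounded state space, the entire variance term is uniformly bounded by $C_2(\sigma_\beta^2 + \sigma_\mu^2)$, with no $\sigma$-dependence in $C_2$.

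With \eqref{eq:LVplan} in hand, integrate $dV = LV\,dt + M\,dW$ from $0$ to $t$, drop the nonnegative $V(t)$, and divide by $t$ to obtain
\begin{equation*}
C_1 \cdot \frac{1}{t}\int_0^t \bigl((S-s)^2 + I^2 + R^2 + (S^*-s^*)^2 + (I^*)^2 + (R^*)^2\bigr)\,ds \le \frac{V(0)}{t} + C_2(\sigma_\beta^2 + \sigma_\mu^2) + \frac{1}{t}\int_0^t M \, dW.
\end{equation*}
By Theorem \ref{t.w09271} the populations are a.s. bounded, so $M$ is bounded and $\int_0^t M \, dW$ is a continuous martingale whose quadratic variation grows at most linearly; Lemma \ref{lem:SLLN} therefore yields $\tfrac{1}{t}\int_0^t M\,dW \to 0$ a.s. Taking $\limsup_{t\to\infty}$ delivers the theorem with $C = C_2/C_1$. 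The primary obstacle I anticipate is the algebraic bookkeeping required to recover the precise formula \eqref{eq:R0deterministic} for $\mathscr R_0(s, s^*)$ inside the coefficient of $(v+v^*)^2$: unlike in the proof of Theorem \ref{thm:E0expMSS}, where $s^* = 0$ eliminates many cross terms, both $s, s^* > 0$ here, and identifying the correct splitting of the mixed triple product $(v+v^*)\bigl((1-\alpha)(u+s) + (u^*+s^*)\bigr)\bigl((1-\alpha)v + v^*\bigr)$ so that the residue matches \eqref{eq:R0deterministic} exactly, while the slack terms are absorbed into $-C_1$ using $c_2, c_3$, is the most delicate step.
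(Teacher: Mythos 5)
Your overall architecture (shift to $u=S-s,\dots$, build a nonnegative function $V$ with $LV \le -C_1|x|^2 + C_2(\sigma_\beta^2+\sigma_\mu^2)$, integrate, divide by $t$, and kill the martingale term with Lemma \ref{lem:SLLN}) is exactly how the paper concludes. But the step you yourself flag as "most delicate" is a genuine gap, and the route you propose for it does not work. At the mixed state both $s,s^*>0$, and the linearized infectious subsystem is the genuinely two-dimensional system \eqref{eq:lin} with matrix $M=F_{s,s^*}-V_{s,s^*}$. Adding the $v$ and $v^*$ equations gives, at linear order, $d(v+v^*)\approx[\beta((1-\alpha)s+s^*)((1-\alpha)v+v^*)-(\gamma+\delta)(v+v^*)]dt$, so for the fixed form $(v+v^*)^2$ to have negative drift in every nonnegative direction (take $v=0$, $v^*>0$) you need $\beta((1-\alpha)s+s^*)<\gamma+\delta$. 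This is strictly \emph{stronger} than $\mathscr R_0(s,s^*)<1$: a direct computation from \eqref{eq:R0deterministic} gives $\tfrac{\beta((1-\alpha)s+s^*)}{\gamma+\delta}-\mathscr R_0(s,s^*)=\tfrac{\beta\alpha}{\gamma+\delta}\cdot\tfrac{s(1-\alpha)(\gamma+\delta+\nu)+s^*\nu}{\gamma+\delta+\nu+\mu s^*}>0$, so there are admissible parameters with $\mathscr R_0(s,s^*)<1$ for which $L[(v+v^*)^2]>0$ along some directions near the origin. No Young-splitting of the triple product can repair a sign that is wrong already at linear order, and the rescue used in Theorem \ref{thm:E0expMSS} --- crushing all starred deviations with an arbitrarily large $c_2(n^*)^2$ term --- is unavailable here, because at the mixed equilibrium the noncompliant population persists at level $s^*>0$ and there is no analogue of the $\ell_2$ bound \eqref{eq:line2bound}. (Relatedly, your plan to pattern $V$ on \eqref{eq:stochLyapunovFunc} implicitly reuses sign facts like $u\le 0$ that hold only at the fully compliant state; here $u$ and $u^*$ have no definite sign.)

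The paper's proof handles the infectious block spectrally rather than by a fixed quadratic form: since $\mathscr R_0(s,s^*)<1$ makes $M$ Hurwitz, it solves the Lyapunov/Sylvester equation $M^TQ+QM=-I$ and uses $V_6=\|(v,v^*)\|_Q^2$, whose drift supplies $-v^2-(v^*)^2$ at linear order regardless of how close $\mathscr R_0$ is to $1$; the nonexplicit $\|Q\|_2$ is precisely what enters the constant $C$ in \eqref{eq:CCC}. The remaining coordinates are then controlled not by one quadratic form but by a carefully balanced combination $V_1=(n+n^*)^2$, a logarithmic piece $V_2=n^*-s^*\log\bigl(\tfrac{n^*+s^*}{s^*}\bigr)$ (which generates the compensating $+\mu nn^*$ cross term), $V_3=(u+v+u^*+v^*)^2$, $V_4=u^2$, and $V_5=(w+w^*)^2$, with weights chosen to cancel the indefinite cross terms; note this is also where $\nu>0$ is used (the weight $\tfrac{2\delta}{\nu}$ on $V_4$). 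If you replace your $(v+v^*)^2$ ansatz with a general adapted form $a v^2+2bvv^*+c(v^*)^2$ you are led back to exactly this Lyapunov-equation construction, so the missing ingredient is not bookkeeping but the $Q$-weighted norm (or an equivalent spectral argument), together with a mechanism other than the $\ell_2$ trick to control the starred deviations.
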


\begin{remark}
This theorem encapsulates a stochastic version of theorem \ref{eq:stabilityOfDFE}(ii), and of theorem \ref{eq:stabilityOfDFE}(iii) in the general case (i.e., any $\xi \in (0,1]$ and $\nu \ge 0)$. Sending $\sigma_\beta, \sigma_\mu \to 0$ one recovers versions of theorems \ref{eq:stabilityOfDFE}(ii),(iii) which are stronger in the sense that they are global results, but weaker in the mode of convergence to the equilibrium point. We remark on the value of the constant $C$ after the proof. 
\end{remark}

\begin{proof}
The proof is quite intricate, but essentially proceeds by constructing another function which acts somewhat like a Lyapunov function, though the argument concludes differently. Rather than specify the  function at the beginning, we will construct it in several steps.  

We consider the new variables $u = S-s, v = I, w = R, u^* =  S^*-s^*, v^* = I^*, w^* = R^*.$ We also define $n = u+v+w$ and $n^* = u^*+v^*+w^*.$ We will again frequently use that $S,I,R,S^*,I^*,R^*$ remain nonnegative and that the total population is bounded by $\tfrac b \delta$. One difficulty here is that $u$ and $u^*$ do not have a definite sign (and this transfers to $n, n^*$) which makes it unclear how to bound certain terms. However, we do know that $u+s = S \ge 0$ and $u^*+s^* = S^* \ge 0$, which also gives $n + s \ge 0$ and $n^*+s^* \ge 0$.  

Taking derivatives and recalling that $s,s^*$ are defined so as to satisfy $(1-\xi)b-\mu ss^* + \nu s^* - \delta s = 0$ and $\xi b +\mu ss^* - (\nu+\delta)s^*=0$, we find that \begin{equation}
    \label{eq:sys47}\begin{split}
        du&= (-\beta(1-\alpha)(u+s)((1-\alpha)v+v^*) - \mu u(n^*+s^*) - \mu s n^* + \nu u^* - \delta u)dt \\
            &\hspace{2cm}+(-\sigma_\beta (1-\alpha)^2(u+s)v - \sigma_\mu (u+s)(u^*+s^*))dW\\
        dv&= (\beta(1-\alpha)(u+s)((1-\alpha)v+v^*)-\gamma v - \mu v(n^*+s^*) + \nu v^* - \delta v)dt \\
            &\hspace{2cm}+(\sigma_\beta (1-\alpha)^2(u+s)v - \sigma_\mu vv^*)dW\\
        dw&= (\gamma v - \mu w (n^*+s^*) +\nu w^* - \delta w)dt - \sigma_\mu ss^*dW \\
        du^*&=(-\beta (u^*+s^*)((1-\alpha)v+v^*) +\mu(u+s)(n^*+s^*) -(\nu+\delta)(u^*+s^*))dt \\
            &\hspace{2cm}+(-\sigma_\beta(u^*+s^*)v^* + \sigma_\mu (u+s)(u^*+s^*))dW\\
        dv^*&=(\beta (u^*+s^*)((1-\alpha)v+v^*) - \gamma v^* +\mu v (n^*+s^*) - (\nu+\delta)v^* \\
            &\hspace{2cm}+(\sigma_\beta (u^*+s^*)v^* + \sigma_\mu vv^*)dW\\
        dw^*&= (\gamma v^* + \mu w (n^*+s^*) - (\nu+\delta )w^*)dt + \sigma_\mu ww^* dW
    \end{split}
\end{equation} We first define $V_1 = (n+n^*)^2$, and we find \begin{equation} \label{eq:dV1}dV_1 = -2\delta(n+n^*)^2dt = (-2\delta n^2 - 2\delta (n^*)^2 - 4\delta nn^*)dt. \end{equation} 
Next, define $$V_2 = S^*+I^*+R^*-s^*-s^*\log(\tfrac{S^*+I^*+R^*}{s^*}) = n^* - s^*\log(\frac{n^*+s^*}{s^*}).$$ We note that $V_2 \ge 0$ for $n^* \ge -s^*$ (indeed, if we treat $V_2$ as a function of $n^*$, we find $V_2''\ge 0$ and $V_2' = V_2 = 0$ at $n^* = 0$, implying that $V_2$ takes a minimum value of $0$ at $n^* = 0$).  Also \begin{equation}
    \label{eq:dV2pre1} \begin{split}
        dV_2&= \bigg[\Big(1-\tfrac{s^*}{n^*+s^*}\Big)\Big(\mu n(n^*+s^*) +(\mu s-(\nu+\delta))n^* \Big) + \tfrac{(s^*)^2\sigma_\mu^2((u+s)(u^*+s^*)+vv^*+ww^*)^2}{2(n^*+s^*)^2} \bigg]dt\\
        &\hspace{2cm}\sigma_\mu\Big(1-\tfrac{s^*}{n^*+s^*}\Big)((u+s)(u^*+s^*)+vv^*+ww^*)) dW.
    \end{split}
\end{equation} Using $(u+s)(u^*+s^*)+vv^*+ww^*\le ((u+s)+v+w)((u^*+s^*)+v^*+w^*) = (n+s)(n+s^*)$ and $1-\tfrac{s^*}{n^*+s^*} = \frac{n^*}{n^*+s^*}$, we see that \begin{equation}\label{eq:dV2pre2}
\begin{split}
    dV_2&\le \left(\mu nn^* + \tfrac{s^*(\mu s - (\nu+\delta))}{(n^*+s^*)}(n^*)^2 + \tfrac{\sigma_\mu^2}{2}\Big(\tfrac b \delta \Big)^4\right)dt+ f(u,v,w,u^*,v^*,w^*)dW
\end{split}
\end{equation} where $f$ is a continuous function such that $\abs{f}$ is bounded by a polynomial in its variables (in particular, since $0\le S,I,R,S^*,I^*,R^*\le \tfrac b \delta$, we have $\abs{f} \le C$ for any of these values). In what follows, we will collect all stochastic terms in this manner: the function $f$ will change from line to line, but will be continuous (usually polynomial) and thus bounded on the solution values. In the end, these terms will disappear by lemma \ref{lem:SLLN}. 

Looking at \eqref{eq:dV2pre2}, we note that $\mu s - (\nu+\delta) \le 0$; this follows because, treating $s$ from \eqref{eq:DFS} a function of $\xi \in [0,1]$, it is decreasing in $\xi$, and thus reaches its maximum value of $s = \frac{\delta + \nu}{\mu}$ at $\xi = 0$. Hence we arrive at \begin{equation}
\label{eq:dV2}
dV_2 \le \left(\mu nn^* + \tfrac{\sigma_\mu^2}{2}\Big(\tfrac b \delta\Big)^4\right) dt + fdW.
\end{equation} Looking at \eqref{eq:dV1} and \eqref{eq:dV2}, we now have cross terms $nn^*$ with opposite signs, so we can balance these by multiplying by an appropriate constant, and arrive at  \begin{equation} \label{eq:dV12}d\Big(V_1+\tfrac{4\delta}\mu V_2\Big) \le \left(-2\delta n^2 -2\delta (n^*)^2 + \tfrac{2\delta \sigma_\mu^2}{\mu} \Big(\tfrac b \delta \Big)^4\right)dt + fdW  \end{equation} This gives us some control on $(n^*)^2$ which is necessary to control $u$, since $n^*$ appears in the equation for $u$. 

Now we define $V_3 = (u+v+u^*+v^*)^2$. Then \begin{equation} \label{eq:dV3pre} \begin{split}
dV_3 &= (-2\delta(u+v+u^*+v^*)^2-2\gamma(v+v^*)(u+v+u^*+v^*))dt \\ 
&\le \Big(-2\delta (u^2 + v^2+ (u^*)^2+ (v^*)^2) - 2(\gamma+2\delta)(uv+uv^* + u^*v+u^*v^*) -4\delta uu^*\Big)dt.
\end{split}
\end{equation} Here we have discarded $-4\delta vv^*$ which is the only cross term known to be nonpositive. We use the inequality $-2(\gamma+2\delta)uv \le \tfrac \delta 2 u^2 + \tfrac{2(\gamma+2\delta)^2}{\delta}v^2$ and similarly for the other cross terms involving the constant $(\gamma+2\delta).$ This results in \begin{equation}\label{eq:dV3}dV_3 \le \bigg(-\delta u^2 - \delta (u^*)^2 - \Big(2\delta - \tfrac{4(\gamma+2\delta)^2}{\delta}\Big)v^2 + \Big(2\delta - \tfrac{4(\gamma+2\delta)^2}{\delta}\Big)(v^*)^2 - 4\delta uu^*\bigg)dt. \end{equation} 

Next consider $V_4 = u^2$ so that \begin{align*}
    dV_4 &= \bigg[-2\beta(1-\alpha)u(u+s)((1-\alpha)v+v^*) - 2\mu u^2(n^*+s^*) - 2\mu s un^* + 2\nu uu^* - 2\delta u^2 \\ 
    &\hspace{2cm}+(-\sigma_\beta (1-\alpha)^2(u+s)v - \sigma_\mu (u+s)(u^*+s^*))^2\bigg]dt + fdW \\
    &\le \bigg[-2\beta(1-\alpha)^2uv(u+s) -2\beta(1-\alpha)uv^*(u+s) - 2\mu s un^* + 2\nu uu^* - 2\delta u^2 \\ 
    &\hspace{2cm}+(-\sigma_\beta (1-\alpha)^2(u+s)v - \sigma_\mu (u+s)(u^*+s^*))^2\bigg]dt + fdW
\end{align*} In the triple terms as the start, we use $u+s \le \tfrac b \delta$, $(1-\alpha) \le 1$,  as well as $$\abs{2\beta uv} \le \tfrac{\delta}{2}u^2 + \tfrac{2\beta^2}{\delta}v^2$$ and similarly for the $uv^*$ term. We also use $2\mu sun^* \le \delta u^2 + \tfrac{\mu^2 s^2}\delta (n^*)^2$, and bound the stochastic term using the total population bound. This yields \begin{equation} \label{eq:dV4} 
dV_4 \le \Big(\tfrac{2\beta^2}{\delta}v^2 + \tfrac{2\beta^2}{\delta}(v^*)^2 + \tfrac{\mu^2 s^2}{\delta}(n^*)^2 +2\nu uu^* + 2\Big( \tfrac b \delta\Big)^4(\sigma_\beta^2 + \sigma_\mu^2)\Big)dt + fdW
\end{equation} From \eqref{eq:dV3} and \eqref{eq:dV4}, we arrive at \begin{equation}
    \label{eq:dV34} \begin{split} 
    d\Big(V_3 + \tfrac {2\delta}\nu V_4\Big) &\le \bigg[-\delta u^2 -\delta(u^*)^2 + \tfrac{2 \mu^2 s^2}{\nu}(n^*)^2 + \tfrac {4\delta}\nu\Big( \tfrac b \delta\Big)^4(\sigma_\beta^2 + \sigma_\mu^2) \\ &\hspace{0.5cm}-  \Big(2\delta - \tfrac{4(\gamma+2\delta)^2+2\beta^2(2\delta/\nu)}{\delta}\Big)v^2 - \Big(2\delta - \tfrac{4(\gamma+2\delta)^2 +2\beta^2(2\delta/\nu)}{\delta}\Big)(v^*)^2\bigg]dt + fdW.
    \end{split}
\end{equation} 

Moving along we set $V_5 = (w+w^*)^2$ and find that \begin{equation} \label{eq:dV5}
\begin{split}
    dV_5 &= \Big(2\gamma(v+v^*)(w+w^*) - 2\delta(w+w^*)^2\Big)dt \\
    &\le \tfrac{4\gamma^2}{\delta} v^2 + \tfrac{4\gamma^2}{\delta} (v^*)^2 - \delta w^2 - \delta (w^*)^2.
\end{split}
\end{equation}

We now multiply \eqref{eq:dV12} by $\tfrac{\mu^2s^2}{\delta \nu}$ (so as to cancel the positive $(n^*)^2$ in \eqref{eq:dV34}), and add the result to \eqref{eq:dV34} and \eqref{eq:dV5}, to arrive at \begin{equation} \label{eq:dV12345} \begin{split}
d\Big(\tfrac{\mu^2s^2}{\delta \nu}\Big(V_1+\tfrac{4\delta}\mu V_2\Big) + V_3 + \tfrac {2\delta}\nu V_4 + V_5 \Big) &\le \bigg[-\delta u^2 - \delta (u^*)^2 - \delta w^2 - \delta (w^*)^2 \\ 
&\hspace{1cm}-  \Big(2\delta - \tfrac{4\gamma^2 + 4(\gamma+2\delta)^2+2\beta^2(2\delta/\nu)}{\delta}\Big)v^2\\
&\hspace{1cm}-  \Big(2\delta -\tfrac{4\gamma^2 + 4(\gamma+2\delta)^2+2\beta^2(2\delta/\nu)}{\delta}\Big)(v^*)^2\\
&\hspace{1cm}+ \tfrac{2\mu s^2+4\delta}{\nu}\Big(\tfrac b \delta\Big)^4(\sigma_\beta^2+\sigma_\mu^2)\bigg]dt + fdW.
\end{split}
\end{equation}  The last thing to do is to deal with the $v$ and $v^*$ equations. Using the linearization \eqref{eq:lin}, we see that \begin{equation}
    d\begin{bmatrix} v \\ v^*\end{bmatrix} = \bigg((F_{s,s^*}-V_{s,s^*}) \begin{bmatrix} v \\ v^*\end{bmatrix} + \begin{bmatrix} \beta(1-\alpha)u((1-\alpha)v+v^*) - \mu vn^* \\ \beta u^*((1-\alpha)v+v^*) + \mu vn^*\end{bmatrix} \bigg)dt + \begin{bmatrix} \Sigma_v \\ \Sigma_{v^*} \end{bmatrix} dW,
\end{equation} where $\Sigma_v= \sigma_\beta(1-\alpha)^2(u+s)v -\sigma_\mu vv^*$, $\Sigma_{v^*} = \sigma_\beta (u^*+s^*)v^* + \sigma_\mu vv^*.$ In particular, we have $\Sigma_v^2, \Sigma_{v^*}^2 \le 2\Big(\tfrac{b}{\delta}\Big)^4(\sigma_\beta^2+\sigma_\mu^2)$. Since we are assuming $\mathscr R_0(s,s^*) < 1$, we know that all eigenvalues of $M \defeq F_{s,s^*}-V_{s,s^*}$ have negative real part \cite{NextGenMat3}. Borrowing from the theory of Sylvester equations \cite[Ch. 5]{SE1},\cite[Ch. 8]{SE2}, this is a necessary and sufficient condition for the existence of a symmetric positive definite matrix $Q$ such that $M^tQ+QM=-I$. We define \begin{equation}
    \label{eq:V6def} V_6 = \innerprod{Q\begin{bmatrix} v\\v^*\end{bmatrix}}{\begin{bmatrix} v\\v^*\end{bmatrix}} = \|(v,v^*)\|^2_Q. 
\end{equation} A quick computation shows that \begin{equation}
    \label{eq:V6prebound} \begin{split}
    dV_6 &= \bigg(-v^2 - (v^*)^2 + 2\innerprod{Q\begin{bmatrix} v\\ v^*\end{bmatrix}}{ \begin{bmatrix} \beta(1-\alpha)u((1-\alpha)v+v^*) - \mu vn^* \\ \beta u^*((1-\alpha)v+v^*) + \mu vn^*\end{bmatrix}} \\ &\hspace{4cm} +  2 \innerprod{Q\begin{bmatrix} \Sigma_v \\ \Sigma_{v^*} \end{bmatrix}}{\begin{bmatrix} \Sigma_v \\ \Sigma_{v^*} \end{bmatrix}}\bigg)dt + f dW \\ 
    &= \bigg(-v^2 - (v^*)^2 + 2\innerprod{Q\begin{bmatrix} v\\ v^*\end{bmatrix}}{ \begin{bmatrix} \beta(1-\alpha)u((1-\alpha)v+v^*) - \mu vn^* \\ \beta u^*((1-\alpha)v+v^*) + \mu vn^*\end{bmatrix}} \\ &\hspace{4cm} +  8 \|Q\|_2\big(\tfrac{b}{\delta})^4(\sigma_\beta^2+\sigma_\mu^2)\bigg)dt + f dW \end{split}. 
\end{equation} For the remaining inner product term in \eqref{eq:V6prebound}, we use $v,v^* \le \tfrac b \delta$, as well as \begin{equation}
 (1-\alpha)u((1-\alpha)v+v^*) \le n(v+v^*), \,\,\,\,\,\,\,\, u^*((1-\alpha)v+v^*) \le  n(v+v^*).
\end{equation} Having done so, all remaining terms are bounded by constant multiples of $vn,v^*n,vn^*$ or $v^*n^*$, and we bound each of these using $\beta vn \le \tfrac{3\beta^2 n^2}{2\eps } + \tfrac {\eps v^2}6$, and similarly for other terms. All of this leads to the bound 
\begin{equation} \label{eq:dV6} \begin{split}
    dV_6 &\le \bigg(-v^2-(v^*)^2 + 2\|Q\|_2 \Big(\tfrac b \delta\Big) \bigg( \tfrac {2\eps} 3 v^2 + \tfrac \eps 3 (v^*)^2 + \tfrac{3\beta}{\eps} n^2 + \tfrac{3(\beta+\mu)}{\eps}(n^*)^2\bigg)\\ &\hspace{4cm} +  8 \|Q\|_2\Big(\tfrac{b}{\delta}\Big)^4(\sigma_\beta^2+\sigma_\mu^2)\bigg)dt + f dW   \\ 
    &\le \bigg(-\tfrac 1 3 v^2 - \tfrac 1 3 (v^*)^2 + 12\|Q\|^2_2\Big( \tfrac b \delta \Big)^2(\beta+\mu)(n^2 + (n^*)^2) dt \\ &\hspace{4cm}+ 8\|Q\|_2 \Big( \tfrac{b}\delta\Big)^4(\sigma_\beta^2+\sigma_\mu^2) \bigg)dt+ fdW,  \end{split}
\end{equation} where we chose $\eps = 1/(2\|Q\|_2 (b/\delta))$ in the last line.

We conclude by combining the bounds \eqref{eq:dV12}, \eqref{eq:dV12345} and \eqref{eq:dV6}. Specifically, define $$V = \Big(\tfrac{\mu^2s^2}{\delta \nu}\Big(V_1+\tfrac{4\delta}\mu V_2\Big) + V_3 + \tfrac {2\delta}\nu V_4 + V_5\Big) + c_1 \Big(V_1 + \tfrac {4\delta}\mu V_2\Big)+c_2 V_6.$$ This is nonnegative as long as $c_1,c_2$ are nonnegative. From the right hand sides of \eqref{eq:dV12}, \eqref{eq:dV12345} and \eqref{eq:dV6}, we see \begin{equation} \label{eq:dVFinalBoundPre}
    \begin{split}
        dV&\le \bigg(-\delta(u^2+(u^*)^2+w^2+(w^*)^2) +\Big(12\|Q\|^2_2\Big( \tfrac b \delta \Big)^2(\beta+\mu) c_2 - 2c_1\delta\Big)(n^2 + (n^*)^2) \\ 
        &\hspace{1cm} -\Big(2\delta - \tfrac{4\gamma^2 + 4(\gamma+2\delta)^2+2\beta^2(2\delta/\nu)}{\delta} +\frac{c_2}3\Big)(v^2+(v^*)^2) \\ &\hspace{1cm}+C(\sigma_\beta^2+\sigma_\mu^2)\bigg)dt + fdW\end{split}
\end{equation} Choosing $c_2 = \tfrac{12\gamma^2 + 12(\gamma+2\delta)^2+6\beta^2(2\delta/\nu)}{\delta}$ and then $c_1 = \frac{12\|Q\|^2_2(b/\delta)^2(\beta+\mu) c_2}{2\delta}$, we arrive at  \begin{equation} \label{eq:dVFinalBound}
    dV \le \Big(-\delta(u^2+v^2+w^2+(u^*)^2 + (v^*)^2+(w^*)^2)+ C(\sigma_\beta^2 + \sigma_\mu^2)\Big)dt + fdW.
    \end{equation} Integrating on $[0,t]$ and dividing by $t$ and $\delta$ shows that $$\frac{V(t)-V(0)}{t\delta } + \frac 1 t\int^t_0 \Big(u^2+v^2+w^2+(u^*)^2 + (v^*)^2+(w^*)^2\Big) \le C(\sigma_\beta^2+\sigma_\mu^2) + \frac{1}{t}\int^t_0 fdW.$$ Taking the $\limsup$ as  $t\to\infty$, the result follows since $V$ is nonnegative and the integral on the right hand side goes to zero almost surely by lemma \ref{lem:SLLN}.
\end{proof}

In particular, the constant $C$ in theorem \ref{thm:Xssbehavior} can be almost explicitly identified. The value that follows from the specific choices made in our proof is \begin{equation}\label{eq:CCC}C = \left(\tfrac{2\mu s^2+4\delta}{\nu}+ 48\|Q\|_2\tfrac{2\gamma^2 + 2(\gamma+2\delta)^2+\beta^2(2\delta/\nu)}{\delta}  \bigg(1 + \tfrac{4\|Q\|_2 (b/\delta)^2(\beta+\mu)}{3\mu}\bigg) \right) \left(\tfrac b \delta\right)^4\end{equation} which has not been optimized in any way, but still demonstrates how the constant should behave under different parameter regimes. The only non-explicit part of the constant is $\|Q\|_2$ where $Q$ is the solution of the Sylvester equation $M^tQ+QM=-I$ where $M \defeq F_{s,s^*}-V_{s,s^*}$. This matrix $Q$ is quaranteed to exist if and only if all eigenvalues of $F_{s,s^*} - V_{s,s^*}$ have negative real part which holds if and only if $\mathscr R_0(s,s^*)<1$. Thus we expect that this terms causes $C$ to blow up when $\mathscr R(s,s^*) \to 1$. Other things to notice is that $C\to \infty$ if $\nu \to 0, \mu \to 0$ or $\delta \to 0$ which makes intuitive sense.  If $\nu \to 0$ there is no recovery from noncompliance so everyone becomes noncompliant and we can no longer guarantee that $(S-s)$ remains small; if $\mu \to 0$, then no one is becoming noncompliant so this population dies out and we cannot guarantee that $(S^*-s^*)$ remains small; and as $\delta \to 0$, the upper bound on the population gets very large and thus the variance in the stochastic system also increases. 

\section{Simulations and discussion} \label{sec:sim}

We present some simulations to demonstrate various aspects of our model, and empirically verify the theorems. All simulations were performed in MATLAB. System \eqref{eq:SIRwithCompliance} was discretized using a simple explicit Euler scheme, and \eqref{eq:StochSIR} was discretized using Milstein's scheme \cite[Ch. 10]{KloedenBook}. 

For all simulations we consider the time interval $[0,50]$ with discretization parameter $\Delta t = 0.05$. For the first four figures, we have birth and death rate $b = \delta = 0.2$, disease recovery rate $\gamma = 1$, and reduction in infectivity $\alpha = 0.25$. For initial conditions, we use $S_0 = S^*_0 = 0.25, I_0 = I^*_0 = 0.25 - 10^{-8}, R_0 = R_0^* = 10^{-8}$. The small positive values of $R_0,R_0^*$ are used simply so that all initial conditions are positive and theorem \ref{t.w09271} holds. In this manner, at the outset, roughly half the population is infected and half the population is noncompliant. All of these choices are synthetic, and are not meant to reflect any specific real-world scenario, but rather to demonstrate the behavior of the model. The remaining parameters are the disease infectivity rate ($\beta$), the portion of newly introduced population which is noncompliant $(\xi)$, the infectivity and recovery rates for noncompliance ($\mu$ and $\nu$ respectively),  the levels of uncertainty in the infectivity rates for the disease ($\sigma_\beta$) and noncompliance ($\sigma_\mu$). These will be varied for different simulations (and in the final simulation, we will vary some of the preceding parameters as well).

In each figure, we include the solution of the deterministic system in dotted lines and one realization of the stochastic system in solid lines. We display the susceptible and infected classes, neglecting the recovered classes, simply because their behavior isn't particularly interesting and can be constructed from the others. 

In our first simulation, we take $\beta = 1$, $\xi = 0$, $\mu = \nu = 0.2$, and $\sigma_\beta = \sigma_\mu = 0.5$. Plugging in these values, we find that $1.625 = \tfrac b \delta + \frac{\sigma_\mu^2}{2\mu}\Big(\tfrac b \delta \Big)^2 < \frac{\nu + \delta}{\mu} = 2$ and $\mathscr R_0(\tfrac b \delta,0) < \mathscr R^\sigma_0(\tfrac b \delta) \approx 0.860.$ So the hypotheses of both theorem \ref{eq:stabilityOfDFE}(i) and theorem \ref{thm:E0expMSS} are met and the complaint disease free equilibrium $s = \tfrac b \delta, s^* = 0$ is stable. Indeed, this is born out in figure \ref{fig:1}, where $S^*,I,I^*$ all seemingly exhibit immediate exponential decay and $S$ simply increases to $s = \tfrac b \delta = 1$. 

\begin{figure}[b]
    \includegraphics[width=0.9\textwidth]{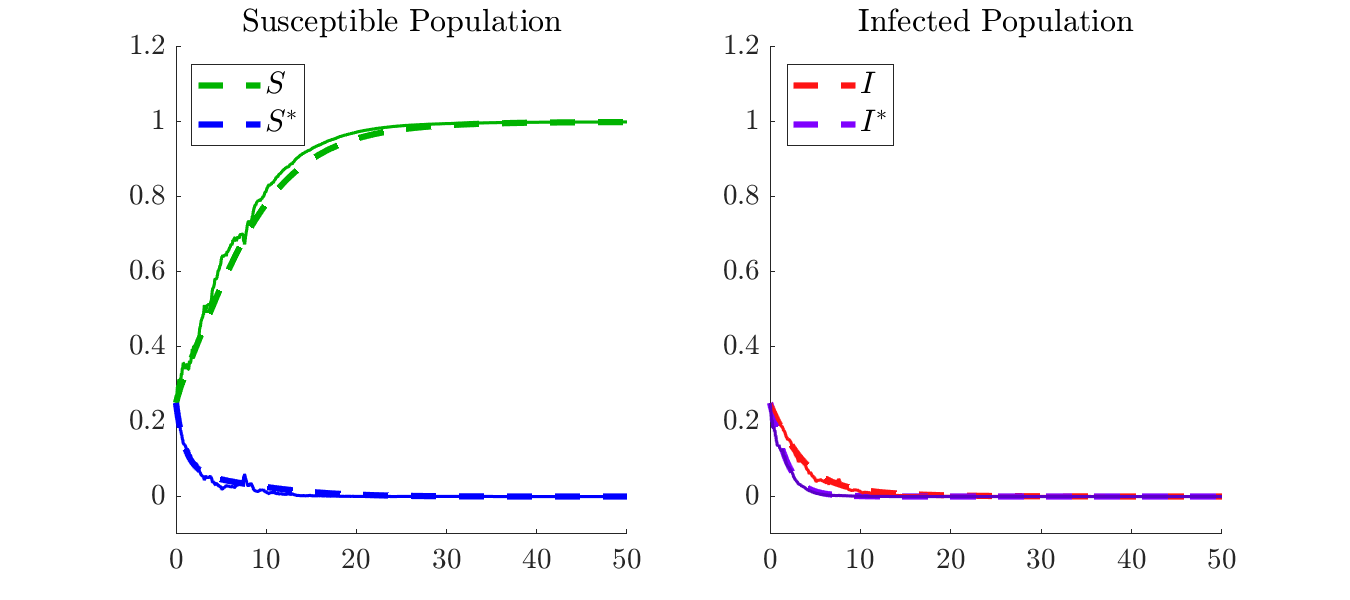}
    \caption{When the conditions of theorem \ref{thm:E0expMSS} (and thus theorem \ref{eq:stabilityOfDFE}(i)) are met, we see stability of the compliant disease free state $s = \tfrac b \delta, s^* = 0$ for both the deterministic and stochastic simulations.}
    \label{fig:1}
\end{figure}

For our second simulation, we take $\beta = 1, \xi = 0$, $\mu = \nu = 0.2$, and $\sigma_\beta = \sigma_\mu = 2$. Here, the deterministic part is the same as in the first simulation, but we have substantially increased the randomness. In this case, $1 = \tfrac b \delta < \tfrac{\nu+\delta}{\mu} = 2$ and $\mathscr R_0(\tfrac b \delta,0) \approx 0.803$ so the conditions of theorem \ref{eq:stabilityOfDFE}(i) are met, and stability is maintained in the deterministic case, but $11 = \tfrac b \delta + \frac{\sigma_\mu^2}{2\mu}\Big(\tfrac b \delta \Big)^2 > \frac{\nu+\delta}{\mu} = 2$ and $\mathscr R_0^\sigma(\tfrac b \delta,0) \approx 1.708$ so neither of the conditions for theorem \ref{thm:E0expMSS} are met and we cannot guarantee stability in the stochastic case. In this case, the stochastic simulations often exhibit nontrivial outbreaks of the disease as seen in figure \ref{fig:2}, where there is an initial outbreak of the disease among the noncompliant population, and later a significant outbreak among the compliant population that correlates with a random spike in noncompliance. Interestingly, in all realizations we observed, by time $T = 50$, the stochastic simulation seems to match the deterministic one, which gives empirical evidence that the sufficient conditions for stability in theorem \ref{thm:E0expMSS} may not be necessary (though it is not clear how they could be relaxed in our proof). 

\begin{figure}[t]
    \includegraphics[width=0.9\textwidth]{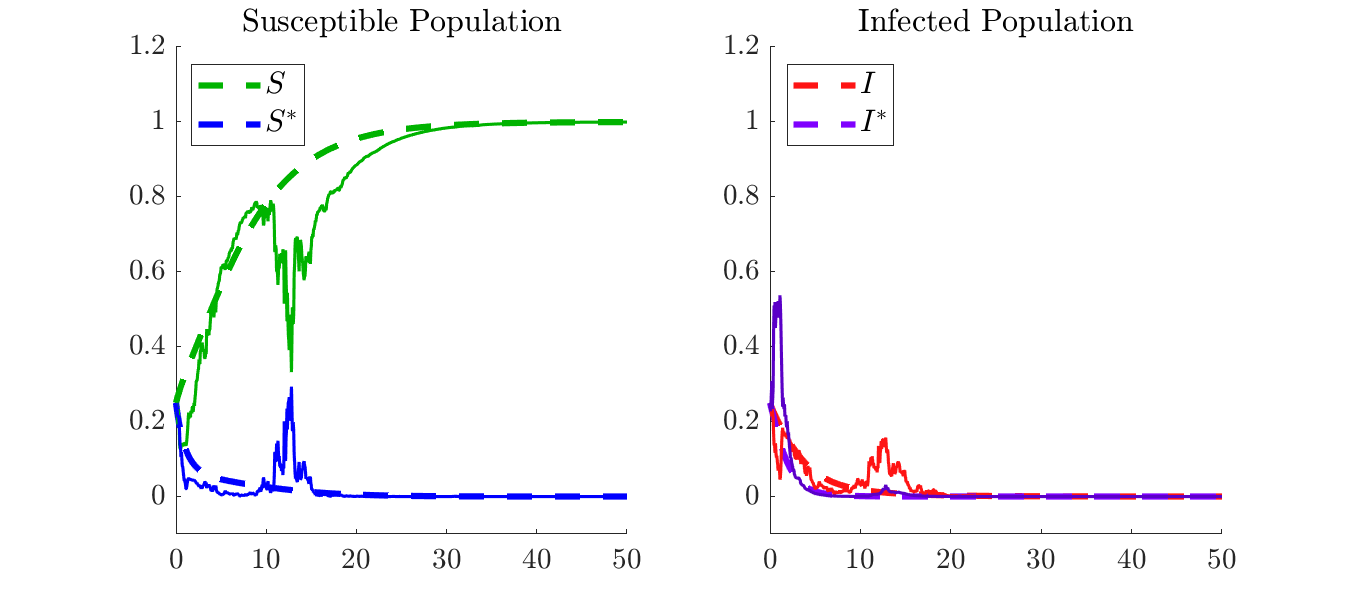}
    \caption{When the conditions of \ref{eq:stabilityOfDFE}(i) are met but the conditions of theorem \ref{thm:E0expMSS} are not met, we can only guarantee stability in the deterministic case. Because of this, we often observe significant outbreaks of the disease, as see in the right panel above. Here there is an immediate outbreak of the disease among the noncompliant population, and later an outbreak among the compliant population which is correlated with a random spike in noncompliance.}  
    \label{fig:2}
\end{figure}

For our third simulation, we consider $\beta = 0.6, \xi = 1$, $\mu = 0.1$, $\nu = 0$, and $\sigma_\beta = \sigma_\mu = 0.4$. In this case, $\mathscr R_0^\sigma(0,\tfrac b \delta) \approx 0.971$ and $0.16 = \Big(\tfrac \beta \delta\Big)^2\sigma_\mu^2 < \delta = 0.2$, so the conditions of theorem \ref{thm:WorstCase} are met (as well as those of theorem \ref{eq:stabilityOfDFE}(iii) in the worst case scenario of $\xi = 1$ and $\nu = 0$), so the disease free equilibrium $s = 0, s^* = \tfrac b \delta$ is stable for both the deterministic and stochastic systems. This is seen in figure \ref{fig:3}. If however, we increase $\sigma_\mu = 2$ (leaving all other parameters as specified), we still have $\mathscr R_0^\sigma(0,\tfrac b \delta) \approx 0.971$, but now $4 = \Big(\tfrac \beta \delta\Big)^2\sigma_\mu^2 > \delta = 0.2$. Thus according to theorem \ref{thm:WorstCase}, the infections should still exhibit exponential decay, but we can no longer guarantee stability for the DFE. This is seen in figure \ref{fig:4} where the infections do indeed die out, but there is some initial fighting between $S$ and $S^*$. As mentioned in remark \ref{rem:Thm44}, this would be unlikely to cause any concern for the policy-maker, since the important consideration is whether or not infections persist, not the particular  concentration of compliance and noncompliance once infections have died out, and any additional compliant population is actually a boon since they will reduce the effective reproductive ratio.

\begin{figure}[t]
    \includegraphics[width=0.9\textwidth]{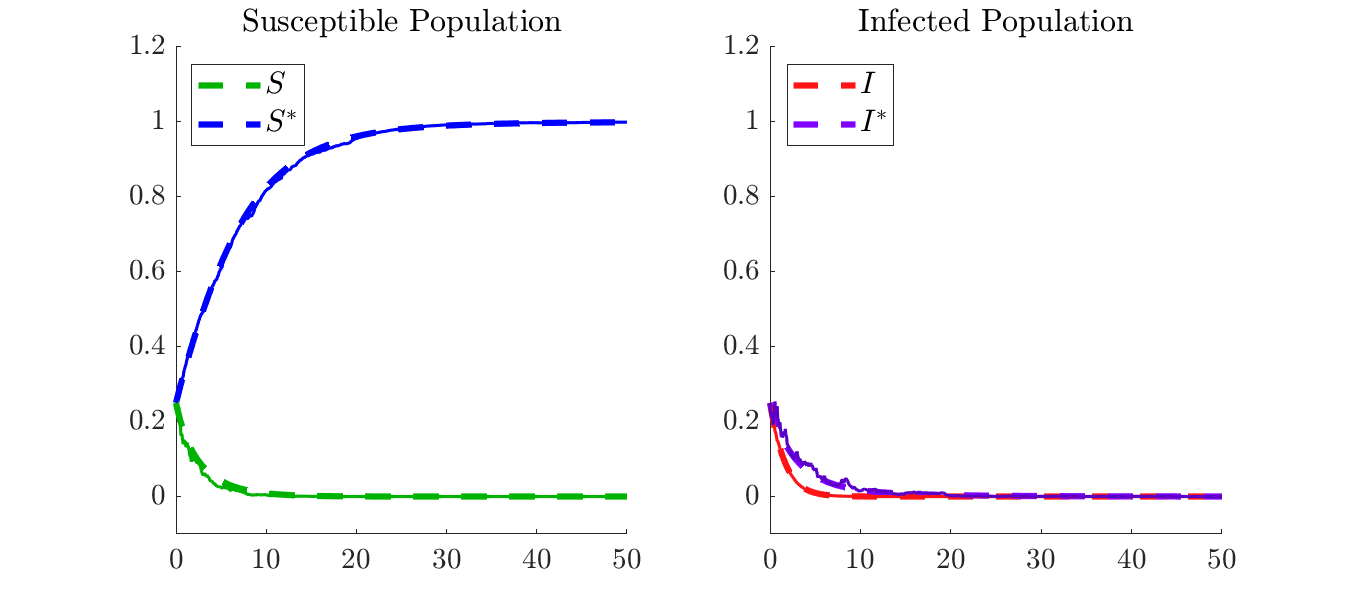}
    \caption{In the case of $\xi = 1$ and $\nu = 0$, when the conditions of theorem \ref{thm:WorstCase} are met, the disease free equilibrium $s = 0, s^* = \tfrac b \delta$ is stable for both the stochastic and deterministic systems.}  
    \label{fig:3}
\end{figure}

\begin{figure}[t]
    \includegraphics[width=0.9\textwidth]{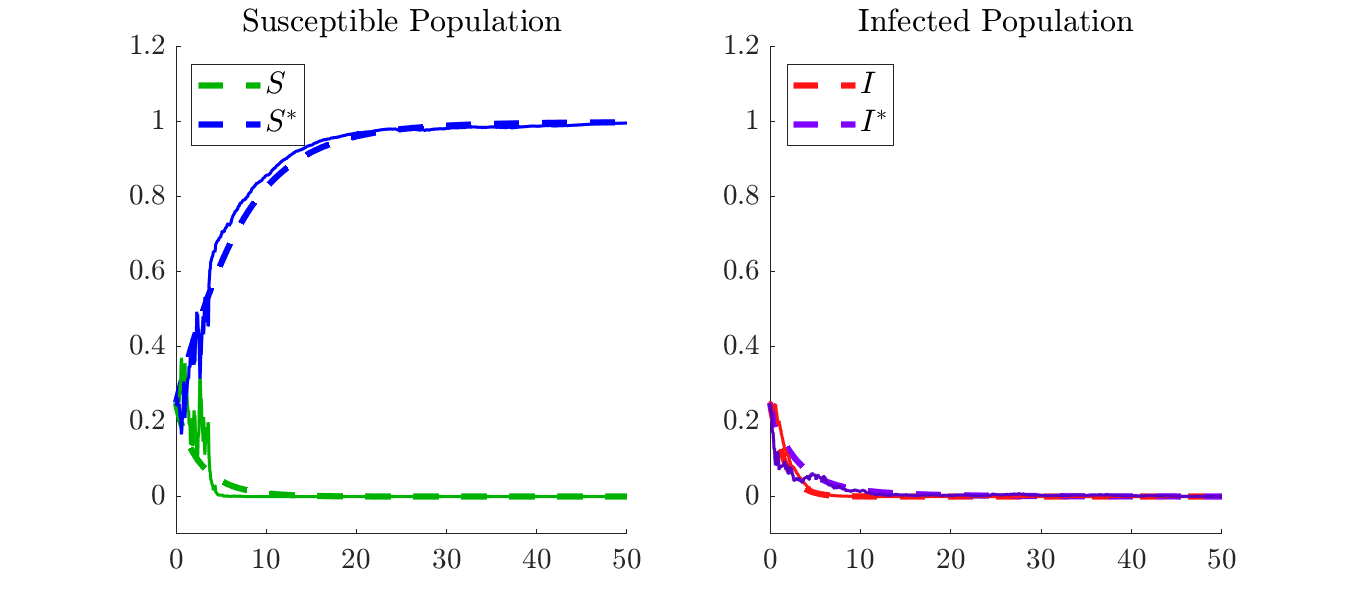}
    \caption{In the case of $\xi= 1$ and $\nu = 0$, when $\mathscr R_0^\sigma(0,\tfrac b \delta) <1$, but $\Big(\tfrac b \delta \Big)^2\sigma_\mu^2 > \delta$ the disease free equilibrium $s = 0, s^* = \tfrac b \delta$ is stable for the deterministic system. It may not be stable for the stochastic system, but regardless, by theorem \ref{thm:WorstCase}, we can still guarantee that infections die out. In this case, the particular dynamics of $S$ and $S^*$, which are somewhat unpredictable especially at the outset, would likely not concern the policy-maker.}  
    \label{fig:4}
\end{figure}

In our final simulation, we vary several of the parameters (including some that were fixed in figures \ref{fig:1}-\ref{fig:4}). Specifically, we set the birth rate to $b = 0.3$, the death rate to $\delta = 1$, the infectivity rate of the disease to $\beta = 0.5$, the recovery rate for the disease to $\gamma = 0.25$, the infectivity rate for noncompliance to $\mu = 8,$ the recovery rate from noncompliance to $\nu = 1$, and the uncertainty in the infectivity rates to $\sigma_\beta = \sigma_\mu = 0.125$. We take $\xi = 0$ so that newly introduced members of the population are uniformly compliant. The reduction in infectivity is held at $\alpha = 0.25$.  With all these decisions, we consider the disease free state $s = \frac{\nu+\delta}{\mu} = \frac{1}{4}$ and $s^* = \tfrac b \delta - \frac{\nu+\delta}{\mu} = \frac{1}{20}$. In this case $\mathscr R_0(s,s^*) \approx 0.0772$ so we are well within the local asymptotic stability regime provided by theorem \ref{eq:stabilityOfDFE}(ii). Thus the solution of the deterministic system should tend toward the disease free equilibrium. While the stochastic system will not be at equilibrium at this pair $(s,s^*)$, we do satisfy the hypothesis of theorem \ref{thm:Xssbehavior}, and taking $C$ from \eqref{eq:CCC}, we have $C(\sigma_\beta^2+\sigma_\mu^2) \approx 0.0171$. So while the solution of the stochastic system will not be steady, it should stay within these error bars of the deterministic solution. This is seen to occur in figure \ref{fig:5} where we plot the deterministic solution along with three realizations of the stochastic system for times $t \ge 10$. We see that the stochastic realizations stay well within the guaranteed error bars, which are displayed in gray. 

\begin{figure}[t]
    \includegraphics[width=0.9\textwidth]{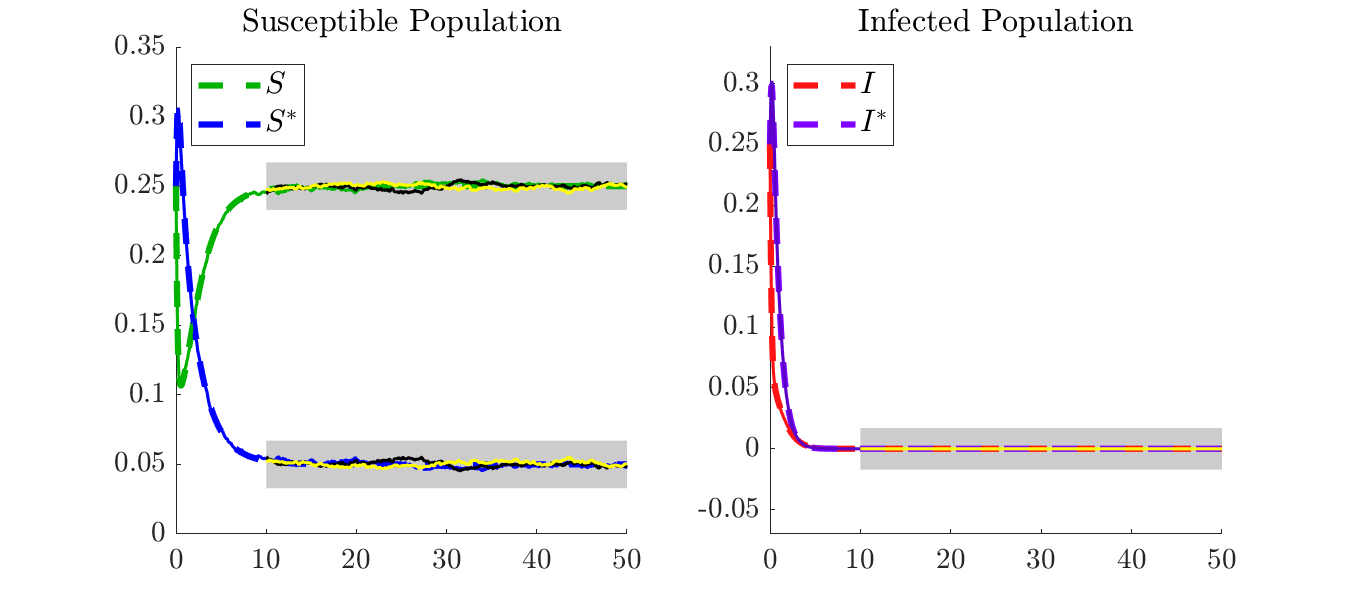}
    \caption{The mixed disease free state $(s,s^*)$ defined in \eqref{eq:DFS} is a equilibrium point for the deterministic model, but \emph{not} for the stochastic model. When $\mathscr R_0(s,s^*)<1$, theorem \eqref{eq:stabilityOfDFE} guarantees stability of this DFE for the deterministic system, and theorem \ref{thm:Xssbehavior} gives error bars for the solution to the stochastic system for large time (represented in gray). Plotted are the deterministic solution, and three realizations of the stochastic system (for times $t \ge 10$). Note the modified axes when compared with other figures.}  
    \label{fig:5}
\end{figure}

\section{Concluding remarks} \label{sec:conclusion}

In this manuscript we analyze ODE and stochastic ODE models for epidemiology incorporating human behavior in the form of noncompliance with NPIs. After analyzing DFEs for the deterministic model, we prove long time existence and positivity for the stochastic model and establish stability conditions for two DFEs by constructing suitable stochastic Lyapunov functions. Finally, we consider points which are DFEs of the deterministic model but not the stochastic model, and we quantify how much the stochastic model may stray from these points in the parameter regimes where they are stable for the deterministic model. We demonstrate our results with simulations. 

{\CPEDIT From a epidemiological standpoint, our results indicate that, comparing a deterministic model to a stochastic one wherein infection rates (both for the disease and for the social contagion of noncompliant behavior) are modified with Gaussian white noise, the extinction criteria for the disease are strictly stronger in the stochastic case. Mathematically, this is seen in that sufficient conditions for disease extinction in theorems \ref{thm:E0expMSS} and \ref{thm:WorstCase} are strictly stronger than their deterministic analogs. Likewise conditions which prevent the spread of noncompliance must be strengthened in the stochastic case. In some sense, this indicates that, while the white noise can either raise or lower infection rates, the policy-maker would need to plan for the worst.} 

We especially emphasize that theorems theorems \ref{thm:E0expMSS} and \ref{thm:WorstCase} provide \emph{sufficient} conditions for disease extinction. As such, they do not address disease persistence and may be able to relaxed. For example, in \cite{Gray,reducedR0} the authors derive thresholds for disease extinction or persistence wherein extinction becomes more likely in the presence of noise. However, the models in \cite{Gray,reducedR0} are somewhat simpler especially in that the dynamics of infected class have the form $dI = I\cdot(f(S,I)dt+g(S,I)dW)$, and this proportionality to $I$ is heavily exploited in the analysis. Because our model has competing nonlinearities and does not have this structure for the infected class dynamics, it is not straightforward to adapt the analysis \cite{Gray,reducedR0} to our case, and we instead have the stronger sufficient conditions for extinction as in \cite{JJS,vetro}.

This work could be extended in many ways. One would be the identification of and stability conditions for endemic equilibrium points for the deterministic system and quantification of the behavior of stochastic system near these points. Another would be the incorporation of spatial effects resulting in a system of stochastic partial differential equation. There are some deterministic results in this direction in \cite{BPW,PW} but adding stochasticity would lead to nontrivial complications in the analysis. Additionally, the inclusion of nonlocal effects wherein the disease and/or noncompliance can spread nonlocally could lead to interesting models requiring significantly modified analysis, and may be apt in the modeling of spread of attitudes, where social media, news coverage or a variety of other sources may be seen to influence behavior without direct contact. Another direction would be to explore the effects of introducing different types of stochastic perturbation, such as Stratonovich noise as in \cite{BL,CMZ,FWZ,LM}, modified infections rates via Ornstein-Uhlenbeck processes as in \cite{LP,LiuOU}, or a mix of continuous- and discrete-time perturbations as in \cite{alex}. One last path forward would be to consider the governmental protocols as control variables and design a cost functional for the policy-maker so that this becomes a stochastic control problem.

\section*{Acknowledgments}
The authors would like to thank three anonymous reviewers for their insightful comments and remarks. WW was partially supported by a Junior Faculty Fellowship from the University of Oklahoma and a Simons Foundation grant (No. 0007730).

	\bibliographystyle{siam}
	\bibliography{ref}

\bigskip\noindent 
Christian Parkinson, Department of Mathematics \& Department of Computational Mathematics, Science and Engineering, Michigan State University, East Lansing, MI,  USA;
e-mail: \url{chparkin@msu.edu}

\medskip\noindent 
Weinan Wang, Department of Mathematics, University of Oklahoma, Norman, OK, USA;
 e-mail: \url{ww@ou.edu}

\end{document}